\documentclass[11pt]{article}
\usepackage[top=2.54cm, bottom=2.54cm, left=2.5cm, right=2.5cm]{geometry}

\usepackage[T1]{fontenc}
\usepackage[utf8]{inputenc}
\usepackage{lmodern}
\usepackage{microtype}

\usepackage[tbtags]{mathtools}
\usepackage{amssymb, latexsym}
\usepackage{amsthm}
\usepackage{mathrsfs}
\usepackage{enumitem}
\usepackage{aliascnt}

\usepackage{xcolor}
\usepackage[numbers,sort&compress]{natbib}
\usepackage[colorlinks=true, linkcolor=blue, citecolor=blue, urlcolor=magenta]{hyperref}
\usepackage[nameinlink,capitalize]{cleveref}
\usepackage{lineno}

\usepackage{graphicx}
\usepackage{appendix}
\usepackage{fancyhdr}

\usepackage{float}
\usepackage{pict2e}
\usepackage{tikz}
\usepackage{tikz-3dplot}
\usepackage{tkz-graph}
\usetikzlibrary{arrows,shapes,chains}
\usepackage{verbatim}
\usepackage{scalerel}

\allowdisplaybreaks[4]
\numberwithin{equation}{section}

\theoremstyle{plain}
\newtheorem{theorem}{Theorem}[section]

\newaliascnt{lemma}{theorem}
\newtheorem{lemma}[lemma]{Lemma}
\aliascntresetthe{lemma}

\newaliascnt{corollary}{theorem}
\newtheorem{corollary}[corollary]{Corollary}
\aliascntresetthe{corollary}

\theoremstyle{remark}
\newaliascnt{remark}{theorem}

\aliascntresetthe{remark}

\theoremstyle{definition}
\newaliascnt{definition}{theorem}
\newtheorem{definition}[definition]{Definition}
\aliascntresetthe{definition}

\newaliascnt{conjecture}{theorem}
\newtheorem{conjecture}[conjecture]{Conjecture}
\aliascntresetthe{conjecture}

\newaliascnt{problem}{theorem}

\aliascntresetthe{problem}

\newaliascnt{claim}{theorem}

\aliascntresetthe{claim}

\newaliascnt{observation}{theorem}

\aliascntresetthe{observation}

\newaliascnt{fact}{theorem}

\aliascntresetthe{fact}

\newaliascnt{construction}{theorem}
\newtheorem{construction}[construction]{Construction}
\aliascntresetthe{construction}

\crefname{theorem}{Theorem}{Theorems}
\crefname{lemma}{Lemma}{Lemmas}
\crefname{corollary}{Corollary}{Corollaries}
\crefname{remark}{Remark}{Remarks}
\crefname{problem}{Problem}{Problems}
\crefname{conjecture}{Conjecture}{Conjectures}
\crefname{definition}{Definition}{Definitions}
\crefname{claim}{Claim}{Claims}
\crefname{observation}{Observation}{Observations}
\crefname{fact}{Fact}{Facts}
\crefname{construction}{Construction}{Constructions}

\usepackage{nomencl}
\makenomenclature

\newcommand{\eps}{\varepsilon}
\newcommand{\cP}{\mathcal P}

\newcommand{\F}{\mathcal F}

\newcommand{\V}{\mathcal V}

\newcommand{\cA}{\mathcal A}
\newcommand{\cB}{\mathcal B}
\newcommand{\cC}{\mathcal C}
\newcommand{\cH}{\mathcal H}
\newcommand{\bd}{\partial}
\newcommand{\HC}{\mathrm{HC}}
\newcommand{\HF}{\mathrm{HF}}
\DeclareMathOperator{\Bin}{Bin}
\DeclareMathOperator{\Beta}{Beta}

\title{Large tight components and Dirac thresholds for tight Hamilton cycles}

\author{Hao Lin \thanks{School of Mathematical Sciences, University of Science and Technology of China, Hefei, Anhui 230026, China. Email: \texttt{haolinz6@ustc.edu.cn}.} 
\and
Guanghui Wang\thanks{School of Mathematics, Shandong University, Jinan, 250100,  China. 
Research supported by National Key R\&D Program of China
(2020YFA0712400) and National Natural Science Foundation of China (12231018).
Email: \texttt{ghwang@sdu.edu.cn}.}
\and
Jiande Zhang\thanks{School of Mathematics, Shandong University,
Jinan 250100, China. Email: \texttt{202511912@mail.sdu.edu.cn}.}
\and
Wenling Zhou \thanks{School of Mathematics, Shandong University, Jinan, Shandong 250100, China. Research supported by the National Natural Science Foundation of China (No. 12401457),  Shandong Provincial Natural Science Foundation (No. ZR2024QA067), China Postdoctoral Science Foundation (No. 2024M761780), and Young Talent of Lifting engineering for Science and Technology in Shandong, China (No. SDAST2025QTA074). Email: \texttt{gracezhou@sdu.edu.cn}.
\\
\indent \textbf{Keywords:}
Tight component, Tight Hamilton cycle,
Hamilton framework, Johnson graph.}
}

\date{}
\begin{document}
\maketitle

\begin{abstract}
For every odd integer $r\ge1$, let
$q_r=1-2^{-r}\binom r{(r-1)/2}$.
We show that, for every odd $r\ge3$ and sufficiently large $n$, every $n$-vertex $r$-uniform hypergraph with at least
$q_r\binom nr$ edges contains a tight component with
at least $(1/2-o(1))\binom nr$ edges.
This confirms a conjecture posed by Lang and Sanhueza-Matamala in 2024.
As an application, for every odd $r\ge1$ and every integer
$k\ge\lceil r(3-2q_r)\rceil$, we prove that the asymptotic minimum $(k-r)$-degree threshold for a
tight Hamilton cycle in a $k$-uniform hypergraph is $q_r$, which  settles, in this range, another conjecture of
Lang and Sanhueza-Matamala in 2022 stating that the lower-bound construction due to Han and Zhao is asymptotically optimal. Our proof combines an isoperimetric inequality for Johnson graphs with a Kruskal–Katona-type theorem, which is of independent interest.
\end{abstract}

\section{Introduction}\label{sec:intro}
The study of Hamilton cycles has been an important topic in graph and hypergraph theory. A celebrated result of
Dirac~\cite{Dirac52} states that every  graph $G$ on $n\ge3$ vertices with minimum degree $\delta(G)\ge n/2$ contains a Hamilton cycle. Determining analogous degree conditions for spanning structures
in hypergraphs has become a central problem in extremal combinatorics, and we refer to~\cite{RodlRucinski2010,KuhnOsthus2014,Zhao2016} for related surveys.

For an integer $k\ge2$, a \emph{$k$-uniform hypergraph}, or
\emph{$k$-graph}, $H$ consists of a vertex set $V(H)$ and an edge set
$E(H)\subseteq\binom{V(H)}k$. We write $e(H):=|E(H)|$
and $\mu(H):=e(H)/\binom {|V(H)|}k$ for its edge density. All subgraph densities are computed on the same vertex set unless stated otherwise.
A \emph{tight cycle} in $H$ is given by a cyclic ordering
$v_1,\ldots,v_t$ of distinct vertices, where $t\ge k+1$ and every $k$
consecutive vertices form an edge. It is \emph{Hamilton} if
$t=|V(H)|$. For $k=2$, this is the usual notion of a cycle in a graph. Katona and Kierstead~\cite{KatonaKierstead1999} initiated the
systematic study of degree conditions for tight Hamilton cycles.

For $1\le d<k$ and $S\in\binom{V(H)}d$,  the \emph{degree} of $S$ in $H$ is the number
of edges containing $S$, denoted by $\deg_H(S)$.
The \emph{minimum $d$-degree} of $H$ is denoted by
$\delta_d(H):=\min\{\deg_H(S):S\in\binom{V(H)}d\}$.
The case of $d=k-1$ is also known as the \emph{minimum codegree}. The
\emph{link} of $S$ is the $(k-d)$-graph $L_H(S)$ on $V(H)\setminus S$
whose edges are the sets $e\in\binom{V(H)\setminus S}{k-d}$ such that
$S\cup e\in E(H)$. Thus $e(L_H(S))=\deg_H(S)$. For a vertex $v$, we
write $L_H(v):=L_H(\{v\})$. For an $n$-vertex $k$-graph $H$, its \emph{shadow} $\bd H$ is the
family of $(k-1)$-sets contained in an edge of $H$, and we write
$\mu(\bd H):=|\bd H|/\binom n{k-1}$.

A tight Hamilton cycle is the case $\ell=k-1$ of a Hamilton
$\ell$-cycle, whose edges are intervals of $k$ vertices in a cyclic
ordering of all vertices, with successive edges overlapping in
$\ell$ vertices. For $1\le\ell<k$ with $(k-\ell)\nmid k$,
K\"uhn, Mycroft, and Osthus~\cite{KuhnMycroftOsthus2010}
determined the asymptotic minimum codegree threshold.
The exact threshold was subsequently obtained in complementary
ranges by Han and Zhao~\cite{HanZhao2015Codegree},
Gan, Han, and Xu~\cite{GanHanXu2026}, and Lu and
Yuan~\cite{LuYuan2026}. Results for lower minimum degrees include
those of Han, Sun, and Wang~\cite{HanSunWang2025} for $2$-cycles
in $5$-graphs and Lang and
Sanhueza-Matamala~\cite{LangSanhuezaLoose2025} for $(k-2)$-degree
conditions.

These questions also admit transversal variants; see~\cite{JoosKim2020, ChengHanWangWangYang2025,TangEtAl2026}. In such settings, a Hamilton cycle is formed by choosing one distinct edge from each member of
a hypergraph system on a common vertex set. Joos and
Kim~\cite{JoosKim2020} proved the exact transversal version of
Dirac's theorem. General methods for rainbow factors and cycles
were developed by Cheng, Han, Wang, and
Wang~\cite{ChengHanWangWang2023} and Gupta, Hamann, M{\"u}yesser, Parczyk and Sgueglia~\cite{GuptaEtAl2023}, respectively. 

\subsection{Dirac thresholds for tight Hamilton cycles}
Given integers $k>d\ge 1$, we define the \emph{Dirac threshold} $\delta_d^{\HC}(k)$  as the (asymptotic) minimum $d$-degree threshold for tight Hamilton
cycles. 
More precisely, $\delta_d^{\HC}(k)$ is the infimum of all $\delta\in[0,1]$ such that,
for every $\eps>0$, there exists $n_0\in 
\mathbb N$ for which every $n$-vertex
$k$-graph $H$ with $n\ge n_0$ and $\delta_d(H)\ge(\delta+\eps)\binom{n-d}{k-d}$
contains a tight Hamilton cycle.

Using the absorption method, R\"odl, Ruci\'nski, and
Szemer\'edi~\cite{RodlRucinskiSzemeredi2006,RodlRucinskiSzemeredi2008,RodlRucinskiSzemeredi2011}
established $\delta_{k-1}^{\HC}(k)=1/2$ for every $k\ge3$ and for $k = 3$ they obtained the exact value $\lfloor n/2\rfloor$.
Very recently, Lang, Letzter, Ranganathan, and Sanhueza-Matamala announced in a talk that, for every $k\ge3$ and sufficiently large $n$, the exact
minimum codegree threshold is
$\lceil(n-k+2)/2\rceil$, confirming a conjecture of Katona and Kierstead \cite{KatonaKierstead1999}.
For smaller degrees,
R\"odl, Ruci\'nski, Schacht, and
Szemer\'edi~\cite{RodlRucinskiSchachtSzemeredi2017} first showed that $\delta_{1}^{\HC}(3)\le 4/5$.
The equality $\delta_{k-2}^{\HC}(k)=5/9$ was then proved for $k=3$
by Reiher, R\"odl, Ruci\'nski, Schacht, and
Szemer\'edi~\cite{ReiherEtAl2019}, and then for $k=4$ by Polcyn,
Reiher, R\"odl, Ruci\'nski, Schacht, and
Sch\"ulke~\cite{PolcynEtAl2020}.
The general case was established by Polcyn, Reiher, R\"odl, and
Sch\"ulke~\cite{PolcynEtAl2021} and, independently, by Lang and
Sanhueza-Matamala~\cite{LangSanhueza2022}.
More recently, Lang, Schacht, and Volec~\cite{LangSchachtVolec} proved
that $\delta_{k-3}^{\HC}(k)=5/8$ for every $k\ge4$.

For general $d$, Han and Zhao~\cite{HanZhao2016} constructed
hypergraphs without tight Hamilton cycles that give systematic lower
bounds on $\delta_d^{\HC}(k)$. These lower bounds are conjectured to be asymptotically optimal
for every $1\le d<k$; see
\cite[Conjecture~11.2]{LangSanhueza2022} and
\cite[Conjecture~2.3]{LangSanhuezaBandwidth}.

\begin{conjecture}[{\cite{LangSanhueza2022,LangSanhuezaBandwidth}}]\label{conj:HC}
For all integers $r\ge1$ and $k\ge r+1$,
\[
\delta_{k-r}^{\HC}(k)
=
1-\binom r{\lfloor r/2\rfloor}
\frac{
\lceil r/2\rceil^{\lceil r/2\rceil}
(\lfloor r/2\rfloor+1)^{\lfloor r/2\rfloor}
}{
(r+1)^r
}.
\]
\end{conjecture}

For every odd integer $r\ge1$, set
\begin{equation}\label{eq:beta-def}
\beta_r:=2^{-r}\binom r{(r-1)/2},
\qquad
q_r:=1-\beta_r.
\end{equation}

For odd $r=k-d$, the right-hand side of the equation in~\cref{conj:HC} equals $q_r$ and
is witnessed by the following special case of the
construction of Han and Zhao~\cite{HanZhao2016}.

\begin{construction}[\cite{HanZhao2016}]\label{con:balanced}
Let $k\ge2$ and let $n\ge2k$ be even.
Partition $[n]=X\cup Y$ with $|X|=|Y|=n/2$.
Define the $k$-graph $H_{n,k}$ on $[n]$ by
$E(H_{n,k})=\left\{
e\in\binom{[n]}k:
|e\cap X|\ne\left\lceil k/2\right\rceil
\right\}.$
\end{construction}

As a special case of the construction of Han and
Zhao~\cite[Proposition~2.2]{HanZhao2016}, the $k$-graph $H_{n,k}$
contains no tight Hamilton cycle and for every odd
$r\in[k-1]$, $\delta_{k-r}(H_{n,k})
\ge
\left(q_r-O_{k,r}(n^{-1})\right)\binom{n-k+r}{r}$.
Consequently,
\begin{equation}\label{eq:lowerbound}
\delta_{k-r}^{\HC}(k)\ge q_r,
\qquad
\text{for every $k\ge r+1$ and odd $r\ge1$}.  
\end{equation}
Recently, Lang and Sanhueza-Matamala~\cite{LangSanhueza2022,LangSanhuezaBandwidth} developed a \emph{Hamilton framework} that reduces the corresponding upper bound problem to finding large
tight components and matchings in $(k-d)$-uniform links. 

\subsection{Large tight components}
Given a $k$-graph $H$, a \emph{(closed) tight walk} in $H$ is a (cyclically) ordered sequence of
vertices such that every interval of $k$ consecutive vertices forms
an edge of $H$. Vertices and edges may repeat. The \emph{length}
of a tight walk is its number of vertices.
A subgraph $H'$ of $H$ is \emph{tightly connected} if every two
edges of $H'$ lie on a common tight walk in $H'$.
A \emph{tight component} of $H$ is an edge-maximal tightly connected
subgraph. 

The connection between tight components and the Hamilton problem arises via links. A minimum
$d$-degree condition gives an edge density condition in every link of $d$-sets.
Let $H$ be an $n$-vertex $k$-graph, $r=k-d$ and $S\in \binom{V(H)}{d}$. If two
$r$-edges $e,f\in L_H(S)$ share $r-1$ vertices, then the
$k$-edges $S\cup e$ and $S\cup f$ share $k-1$ vertices. Thus, tight connectivity within a link provides the local tight transitions required to find tight cycles in $H$.
Moreover, for every fixed $\eta>0$ and sufficiently large $n$,
any two selected link components of density at least $1/2+\eta$ have an edge in common, and their edge-to-shadow ratios provide
matching estimates. These are the local properties used in the
Hamilton framework.

This leads to the following local extremal question: what edge density
is sufficient to force a tight component containing at least half of
all possible edges?
For graphs, this is easy to solve. However, for $k\ge 3$ much less is known. Allen, B\"ottcher, Cooley, and
Mycroft~\cite{AllenBottcherCooleyMycroft2017} obtained related results
in the sparse regime for $k=3$. Lang, Schacht, and
Volec~\cite{LangSchachtVolec} proved that edge density $5/8$
forces a tight component of asymptotic density $1/2$ in $3$-graphs. Motivated by
this, Lang and Sanhueza-Matamala~\cite[Conjecture~12.1]{LangSanhuezaBandwidth} 
made the following conjecture.

\begin{conjecture}[{\cite[Conjecture~12.1]{LangSanhuezaBandwidth} }]\label{conj:components}
For every odd integer $r\ge3$ and every $\eps>0$, there exists
$n_0=n_0(r,\eps)$ such that every $n$-vertex $r$-graph $H$ with
$n\ge n_0$ and $e(H)\ge q_r\binom nr$
contains a tight component $H_0$ satisfying $e(H_0)\ge\left(\frac12-\eps\right)\binom nr.$
\end{conjecture}

Note that the density is best possible, since 
the largest number of edges in a tight
component of $H_{n,k}$ is at most
$\frac12\binom nk$ for each $k\ge 2$.
Indeed, let $h=\lceil k/2\rceil$. Passing between two tightly adjacent
edges changes $|e\cap X|$ by at most one. Since edges with
$|e\cap X|=h$ are excluded, every tight component lies entirely in
one of the families defined by $|e\cap X|\le h-1$ and
$|e\cap X|\ge h+1$. By symmetry between $X$ and $Y$, each family
contains at most half of all $k$-sets.
For odd $r=2m+1\ge3$, this construction satisfies
\[
\mu(H_{n,r})
=1-\frac{\binom{n/2}{m+1}\binom{n/2}m}{\binom nr}
=q_r+O_r(n^{-1}).
\]

A related question asks for a tight component that spans all
vertices.
Georgakopoulos, Haslegrave, and
Montgomery~\cite{GeorgakopoulosHaslegraveMontgomery2019} investigated
how minimum codegree forces a tight component spanning many vertices
in a $3$-graph; in particular, minimum codegree at least
$\lfloor n/3\rfloor$ guarantees a spanning tight component.
More recently, Allsop, Lamaison, Lang, and
Rathke~\cite{AllsopLamaisonLangRathke} determined the asymptotic
minimum vertex-degree threshold for a spanning tight component in a
$3$-graph, while Di Braccio, Hearn, Lada, Neve, and
Zhang~\cite{DiBraccioEtAl} proved that every $n$-vertex $4$-graph
with minimum codegree at least $\lfloor n/4\rfloor$ has a spanning
tight component.
These results establish the degree conditions for a spanning
tight component.

\subsection{Our results}

Our first main result gives an explicit edge-density condition for a
large tight component in every uniformity. For every integer $k\ge2$,
define
\[
\widehat q_k:=
\begin{cases}
q_k, & \text{if $k$ is odd},\\
q_{k+1}, & \text{if $k$ is even}.
\end{cases}
\]

\begin{theorem}
\label{thm:components}
For every integer $k\ge2$ and every $\eps>0$, there exists
$n_0=n_0(k,\eps)$ such that every $k$-graph $H$ on $n\ge n_0$
vertices with $\mu(H)\ge\widehat q_k$ has a tight component $H_0$
satisfying $\mu(H_0)\ge1/2-\eps$.  
\end{theorem}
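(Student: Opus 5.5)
We reduce the statement to an isoperimetric problem on the Johnson graph $J(n,k)$, whose vertices are the $k$-sets of $[n]$, two being adjacent when they share $k-1$ elements. The tight components of $H$ are exactly the connected components of the subgraph of $J(n,k)$ induced by $E(H)$. Suppose, for contradiction, that every tight component of $H$ has density below $1/2-\eps$.

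\textbf{Step 1: a balanced cut.} Group the components greedily into two families $\cA$ and $\cB$, each of density in $[\eps/2,\,1-\mu(H)+\ldots]$ and, more importantly, as balanced as possible. Since each component has density at most $1/2-\eps$, we can arrange
\[
\mu(\cA),\ \mu(\cB)\in\bigl[\tfrac{\mu(H)}{2}-\tfrac12+\eps',\ \tfrac12\bigr].
\]
No edge of $J(n,k)$ joins $\cA$ to $\cB$. Hence the complement $\cC=\binom{[n]}k\setminus E(H)$, of density at most $1-\widehat q_k=\beta$, must separate $\cA$ from $\cB$. Here $\beta$ denotes $\beta_k$ or $\beta_{k+1}$, according to the parity of $k$.

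\textbf{Step 2: vertex-isoperimetry in $J(n,k)$.} We need a vertex-boundary (separator) inequality: any set $\cC$ separating two sets each of density at least about $1/2-\beta/2$ has density at least roughly $\binom{k}{\lfloor k/2\rfloor}$-type central-layer mass. The extremal configuration is the Han--Zhao one. Cut $[n]=X\cup Y$ in half and take $\cC$ to be the layer $|e\cap X|=\lceil k/2\rceil$; its density is exactly $\beta_k$ for odd $k$.

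To prove this, we compare $J(n,k)$ with the hypercube or Bernoulli product measure via a random-partition argument. Choose a uniformly random balanced partition $X\cup Y$ and look at the projection $e\mapsto|e\cap X|$. Alternatively, we use a Kruskal--Katona/Lovász-type shadow bound, applied through the down-shadow and up-shadow of $\cA$ in consecutive layers. Expanding $\cA$ by one step in $J(n,k)$ means going to $\partial\cA$ and then taking the up-shadow. Lovász's version of Kruskal--Katona with real $x$, where $|\cA|=\binom xk$, bounds $|\partial\cA|\ge\binom x{k-1}$. This gives a boundary inequality whose extremal case is a ``ball'' $\{e:|e\cap X|\ge h\}$.

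The plan is to prove the Harper-type statement: among families of fixed density, the vertex boundary in $J(n,k)$ is asymptotically minimised by Hamming-ball-like families $\{|e\cap X|\ge t\}$ (with $|X|$ also optimised). Then the separator must contain a full layer between two balls of mass about $1/2$. With $|X|=n/2$ this layer is the central one, of mass $\beta_k$. Optimising over $|X|$ only helps the separator, and the computation shows that the central balanced cut is the worst case.

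\textbf{Step 3: even $k$.} For even $k$ the central layer bound is $\binom{k}{k/2}2^{-k}$, which exceeds $\beta_{k+1}$. However, a separator need not be a single layer, so the bound is slightly weaker. We handle this with a lifting argument. Given $H$ of even uniformity $k$, consider the $(k+1)$-graph $H^+$ of $(k+1)$-sets all of whose $k$-subsets lie in $H$; by Kruskal--Katona-type averaging it has density at least $\ldots$. This is too lossy, so instead we apply the isoperimetric inequality directly with the threshold $\beta_{k+1}$. The relevant identity is $\beta_{k+1}=\binom{k+1}{k/2}2^{-k-1}=\binom{k}{k/2}2^{-k}$, so $\beta_{k+1}$ equals the even central layer mass. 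Thus the same bound applies.

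\textbf{Main obstacle.} The hardest step is Step 2: establishing the asymptotically sharp vertex-isoperimetric inequality in $J(n,k)$ for sets of mass near $1/2$. The difficulty is that $J(n,k)$ is not a product graph, so Harper's theorem does not apply directly. I would first try compressions (shifting), which preserve the Johnson adjacency boundary monotonically, to reduce to shifted families. For shifted families the structure is governed by Kruskal--Katona. I would then pass to the limit via a central limit/Bernoulli approximation, with $e\cap X$ distributed as Bin$(k,1/2)$. The $\eps$-slack absorbs the $O(1/n)$ error terms.
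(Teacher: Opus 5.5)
Your Step~1 is where the argument breaks down. For odd $k$, vertex-isoperimetry in $J(n,k)$ can force $|N_{\mathrm J}(\cA)|>\beta\binom nk$ only when $\mu(\cA)$ lies strictly inside the window $(1/2-\beta,1/2)$. For every density outside this window there are Hamming-ball-type families $\{e:|e\setminus X|\le s\}$ whose Johnson neighbourhood has density at most $\beta+o(1)$. So you need a union of tight components whose density lies inside the window with some margin, and greedy grouping does not provide one.

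The interval you state is not achievable in general: three components of density $0.3$ cannot be split into two families that both have density at most $1/2$. Its lower end, $\mu(H)/2-1/2+\eps'\le\eps'$, is also far below the roughly $1/2-\beta/2$ that your separator inequality in Step~2 requires. More seriously, for odd $k$ with $\beta_k<1/4$ (that is, $k\ge9$), no grouping need hit the window at all. For $k=9$, take three components each of density $(1-\beta_9)/3\approx0.2513<1/2-\beta_9\approx0.2539$, so that $\mu(H)=1-\beta_9$. Every union of them has density about $0.2513$, $0.5026$ or $0.7539$, and $\Phi_9$ does not exceed $\beta_9$ at any of these values. Such an $H$ does not exist, but ruling it out needs an ingredient that neither one-family nor two-family isoperimetry supplies.

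The missing idea is the paper's induction on $k$ through vertex links, combined with Kruskal--Katona. Some vertex link has density at least $\widehat q_k\ge\widehat q_{k-1}$. By induction it has a tight component of density $1/2-o(1)$, and these edges lift to a single tight component $H_0$ of $H$ with $\mu(\bd H_0)\ge1/2-2\eps$. Distinct tight components have disjoint shadows, so the remainder $H'$ satisfies $\mu(\bd H')\le1/2+2\eps$. Kruskal--Katona then gives $\mu(H')\le(1/2+2\eps)^{k/(k-1)}<1/2-\eps$, hence $\mu(H_0)>1/2-\beta+\eps$. So one component lands in the window; this is exactly what excludes the configuration above, where the other two components would have density above $1/2$. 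The contradiction then comes from $N_{\mathrm J}(E(H_0))\subseteq\binom{V(H)}k\setminus E(H)$ together with $\Phi_k>\beta$ on $(1/2-\beta,1/2)$, which follows from $\Phi_k(1/2)\ge\beta$ by a concavity argument.

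Two further points. In Step~2 you do not need compressions. Lifting each $k$-set to its $k!$ orderings turns Johnson neighbours into Hamming neighbours in $[n]^k$, up to $O(n^{k-1})$ tuples with repeated entries, so Harper's theorem applies directly. What does need proof is $\Phi_k(1/2)=\beta_k$ for odd $k$; this is a genuine optimisation over all radii and weights, and you only assert it. In Step~3 your identity is false: $\binom{k+1}{k/2}2^{-k-1}=\frac{k+1}{k+2}\binom k{k/2}2^{-k}$, e.g.\ $\beta_3=3/8\ne1/2$. The correct route for even $k$ is the monotonicity $\Phi_k\ge\Phi_{k+1}$, obtained by taking cylinders $\cB\times[n]\subseteq[n]^{k+1}$, which gives $\Phi_k(1/2)\ge\beta_{k+1}$.
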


Thus, to our knowledge, \cref{thm:components} provides the first component-type result that holds for every uniformity $k\ge 2$.
In particular,  \cref{thm:components} confirms \cref{conj:components} for odd $k$.
Moreover, the construction in \cref{con:balanced} shows that both the density threshold $q_k$ and the component density $1/2$ are asymptotically best possible in this case. 
For even $k$, the theorem gives the explicit sufficient density $q_{k+1}$, which we do not claim is optimal.

In odd uniformity, 
\cref{thm:components} also implies a quantitative strengthening above
the threshold. A simple edge-deletion argument yields the following result.

\begin{corollary}\label{cor:components}
Let $r\ge3$ be odd and let $0<\eps<\beta_r$. There exists
$n_0=n_0(r,\eps)$ such that every $n$-vertex $r$-graph $H$ with
$n\ge n_0$ and $\mu(H)\ge q_r+\eps$
has a tight component $H_0$ satisfying
$\mu(H_0)\ge \frac12+\frac{\eps}{4}$.
\end{corollary}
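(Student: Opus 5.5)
The plan is to deduce \cref{cor:components} from \cref{thm:components} by repeatedly deleting the edges of large tight components. Let $H$ be an $n$-vertex $r$-graph with $\mu(H)\ge q_r+\eps$. Let $H_0$ be a tight component of $H$ with the maximum number of edges, and suppose for contradiction that $\mu(H_0)<\frac12+\frac{\eps}{4}$.

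First remove $H_0$: set $H_1:=H-E(H_0)$, so that
\[
\mu(H_1)\ge q_r+\eps-\tfrac12-\tfrac{\eps}{4}.
\]
This is far below $q_r$, so \cref{thm:components} cannot be applied to $H_1$ directly. A more careful use of the theorem is needed. The tight components of $H$ partition $E(H)$, and every tight component of $H_1$ (and of any other subgraph of $H$) lies inside some tight component of $H$.

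To exploit this, delete edges of $H_0$ only. Choose a subset $D\subseteq E(H_0)$ with $|D|=\lceil \eps\binom nr\rceil - 1$, and set $H':=H-D$. Then $\mu(H')\ge q_r$ (up to one edge, absorbed by applying the theorem with slack), and since $\eps<\beta_r\le 1/2$ there is room to pick $D$. Apply \cref{thm:components} to $H'$ with error parameter $\eps/4$. This gives a tight component $H'_0$ of $H'$ with $\mu(H'_0)\ge \frac12-\frac{\eps}{4}$, and $H'_0$ lies inside some tight component $C$ of $H$.

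There are two cases. If $C\ne H_0$, then $H'_0$ avoids $D$, so $H'_0\subseteq C$ and $\mu(C)\ge \frac12-\frac{\eps}4$. Since $C$ and $H_0$ are disjoint, $\mu(H_0)\ge\mu(C)$ by maximality, and
\[
\mu(H)\ge \mu(C)+\mu(H_0)\ge 1-\tfrac{\eps}{2}.
\]
This gives no contradiction by itself, so the choice of $D$ must be adjusted. Instead, take $D$ to be all of $E(H_0)$ minus a suitable amount, or iterate. If $C=H_0$, then $H'_0\subseteq H_0\setminus D$, which gives
\[
\mu(H_0)\ge \tfrac12-\tfrac{\eps}4+\eps-o(1)\ge \tfrac12+\tfrac{\eps}{4}.
\]

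To repair the first case, I would first check that any two tight components of $H$ of density at least $\frac12-\frac{\eps}{4}$ force $\mu(H)$ close to $1$. I would then compare this with an upper bound. The deletion step can be run twice: delete $\eps\binom nr$ edges from $H_0$, and if the resulting large component is a different component $C$, delete $\eps\binom nr$ edges from $C$ as well, after first removing whatever is in excess above the density $q_r$. Since $\mu(H)\le 1$, at most two disjoint components can have density near $\frac12$. I would arrange the deletions to come from the union of the large components, so that the component produced by the theorem must be one from which $\eps\binom nr$ edges were removed. That component then has density at least $\frac12-\frac{\eps}{4}+\eps-o(1)$.

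The main obstacle is exactly this case where the guaranteed component switches to a different component of $H$. I would handle it by deleting edges proportionally from all components of density at least $\frac14$, of which there are at most four, while keeping the density at least $q_r$. The concern is that deleting proportionally may dilute the gain to $\eps/4$. This dilution is presumably where the constant $4$ in the statement comes from.
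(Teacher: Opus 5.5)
Your strategy is the same as the paper's: delete about $\eps\binom{n}{r}$ edges so the density falls to $q_r$, then apply \cref{thm:components}. Your case $C=H_0$ is correct. But you never complete the case you yourself call the main obstacle, where the component produced by the theorem lies in a tight component $C\ne H_0$ of $H$. The repairs you sketch are not carried out, and their accounting does not work. First, you cannot remove $\eps\binom{n}{r}$ edges from each of two large components and still keep $\mu(H')\ge q_r$. Second, the version you end with spreads the deletions over up to four components of density at least $1/4$. That only gives $\mu(C)\ge\frac12-\eps'+\frac{\eps}{4}-o(1)$, where $\eps'>0$ is the error with which you apply \cref{thm:components}. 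This is strictly below the claimed $\frac12+\frac{\eps}{4}$.

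The observation that closes the gap is that only tight components of $H$ with at least $(\frac12-\eps')\binom{n}{r}$ edges matter. The component the theorem supplies in $H'$ lies inside a tight component of $H$ with at least as many edges. Since $\frac12-\eps'>\frac13$, there are at most two such components. You had both ingredients, but did not combine them with the right budget.

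The paper argues contrapositively. Assume every tight component of $H$ has fewer than $(\frac12+\frac{\eps}{4})\binom{n}{r}$ edges. Trim each of the at most two components with at least $(\frac12-\frac{\eps}{8})\binom{n}{r}$ edges until it falls below that level. Each trim costs at most $\frac{3\eps}{8}\binom{n}{r}+1$ edges, so at most $\eps\binom{n}{r}$ edges are deleted in total and $\mu(H')\ge q_r$. Deleting edges cannot merge components, so every tight component of $H'$ has fewer than $(\frac12-\frac{\eps}{8})\binom{n}{r}$ edges. This contradicts \cref{thm:components} applied with error $\eps/8$.

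Equivalently, in your direct form: delete $\lfloor\frac{\eps}{2}\binom{n}{r}\rfloor$ edges from each of the at most two components of density at least $\frac12-\eps'$, or $\lfloor\eps\binom{n}{r}\rfloor$ if there is only one. The theorem's component must lie in one of these, which therefore has density at least $\frac12+\frac{\eps}{2}-\eps'-o(1)$. So the factor $4$ in the statement comes from two components sharing the budget $\eps$, with room left for the theorem's error term. It does not come from splitting the budget four ways.
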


As mentioned above, the positive margin above $1/2$ in \cref{cor:components} ensures that components selected
in different links intersect. As an application of~\cref{cor:components}, we prove~\cref{conj:HC} for all odd $r\ge 3$ and sufficiently large $k$. More precisely, for odd $r\ge 1$,  define
\begin{equation}\label{eq:cutoff}
 s_r:=\left\lceil r(3-2q_r)\right\rceil.
\end{equation}

\begin{theorem}\label{thm:main_all_odd}
For every odd integer $r\ge1$ and every integer $k\ge s_r$,
\begin{equation}\label{eq:main}
\delta_{k-r}^{\HC}(k)
=
q_r
=
1-2^{-r}\binom r{(r-1)/2}.
\end{equation}
\end{theorem}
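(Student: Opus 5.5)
The lower bound $\delta_{k-r}^{\HC}(k)\ge q_r$ is exactly \eqref{eq:lowerbound}, coming from \cref{con:balanced}. So the real content of \cref{thm:main_all_odd} is the upper bound: for every $\eps>0$ and large $n$, every $n$-vertex $k$-graph $H$ with $\delta_{k-r}(H)\ge(q_r+\eps)\binom{n-k+r}{r}$ contains a tight Hamilton cycle.

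The case $r=1$ is the codegree result of R\"odl, Ruci\'nski and Szemer\'edi, since $q_1=1/2$. So I would assume $r\ge3$ and apply the Hamilton framework of Lang and Sanhueza-Matamala \cite{LangSanhueza2022,LangSanhuezaBandwidth}. That framework says it suffices to find, for every $d$-set $S$ with $d=k-r$, a selected tight component $C(S)$ of the link $L_H(S)$ satisfying three conditions:
\begin{enumerate}[label=(\roman*)]
\item \emph{connectivity}: selected components of $d$-sets that differ in one vertex are compatible, meaning they share an edge in a suitable sense;
\item \emph{space}: $C(S)$ has a fractional perfect matching, or an almost perfect matching, in the appropriate auxiliary sense;
\item \emph{aperiodicity}: $C(S)$ contains a suitable short closed tight walk.
\end{enumerate}

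\textbf{Choosing the components.} Every link $L_H(S)$ is an $r$-graph on $n-d$ vertices of density at least $q_r+\eps$. By \cref{cor:components} it therefore has a tight component of density at least $1/2+\eps/4$, and I take $C(S)$ to be that component. It is unique, because two components of density above $1/2$ would have to share an edge.

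\textbf{Connectivity.} Take $d$-sets $S$ and $S'$ with $|S\cap S'|=d-1$. I want to show that $C(S)$ and $C(S')$ share an $r$-set, after restricting to the common ground set; restricting changes densities only by $O(1/n)$. Two families of $r$-sets, each of density at least $1/2+\eps/4$, must intersect. To lift this to $k$-edges I would pass through the $(d-1)$-set $S\cap S'$. The edges $S\cup e$ and $S'\cup e$, for a common $e$, are joined via the link of $S\cap S'$. Aperiodicity should follow similarly: densities above $1/2$ in a tight component of $r$-graphs, with $r$ odd, force closed walks of coprime lengths. I would check this against the framework's exact formulation.

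\textbf{Space.} This is the main obstacle, and it is where the hypothesis $k\ge s_r=\lceil r(3-2q_r)\rceil$ must enter. The framework asks for a perfect fractional matching in the $k$-graph made of the edges lying in the selected link components. A standard route is via the edge-to-shadow ratio: a $k$-graph in which every $d$-set has a large proportion of its degree in the selected component has no small vertex cover. I would verify the fractional-matching condition by Farkas duality. Take a weighting $w$ on vertices with total weight less than $n/k\cdot$(normalization), and find a selected edge of weight below $1$. Averaging over random $d$-sets $S$ and random $r$-edges of $C(S)$, the expected weight of such an edge is about $\frac{d}{n}\sum w+\mu$-weighted contributions. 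The bound $\mu(C(S))\ge1/2$, combined with the density $q_r$ of the whole link, controls how far $C(S)$ can concentrate on heavy vertices. Heuristically, the worst case is $C(S)$ concentrating on heavy vertices at a rate governed by $1/\mu(C(S))\le2$, which yields a linear inequality in $r$ and $k$ of roughly the form $k\ge r(3-2q_r)$. That is exactly $s_r$.

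So my main effort would be to set up this averaging or LP duality carefully. I would bound the weight of the heaviest $r$-sets in a family of density $1/2$ inside a link of density $q_r$, accounting for the missing $\beta_r$ fraction, and then confirm that the resulting constraint is precisely $k\ge s_r$. Once all three conditions hold, the framework yields the tight Hamilton cycle.
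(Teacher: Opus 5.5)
The outer structure matches the paper's. You take the lower bound from \cref{con:balanced}, handle $r=1$ separately, work in the Hamilton framework, choose $C(S)$ via \cref{cor:components}, and get connectivity because two selected components share an edge $e$, so $S\cup e$ and $S'\cup e$ meet in $k-1$ vertices.

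\textbf{The space condition is not proved.} This is exactly where $k\ge s_r$ must be used, and you only give a heuristic whose conclusion is read off from $s_r$. Moreover, the information you propose to feed into the averaging cannot suffice. That information is $\mu(C(S))\ge1/2$ inside a link of density at least $q_r$. A density-$1/2$ subfamily of such a link can be all $r$-sets meeting a fixed set $U$ with $|U|\approx(\ln 2)n/r$. If every $C(S)$ looked like this, $U$ would cover every edge of the generated $k$-graph. That rules out a perfect fractional matching whenever $k<r/\ln 2$, which happens for some $k\ge s_r=r+O(\sqrt r)$ once $r$ is large.

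What excludes this example is that $C(S)$ is a \emph{tight component}. Its shadow is disjoint from that of $L_H(S)\setminus C(S)$, whose shadow density is at least its edge density $\mu(L_H(S))-\mu(C(S))$. Hence $\mu(\bd C(S))\le1-\mu(L_H(S))+\mu(C(S))$, and so $\mu(C(S))/\mu(\bd C(S))\ge1/(3-2\mu(L_H(S)))\ge1/(1+2\beta_r-2\eps)$. This edge-to-shadow bound is where $3-2q_r=1+2\beta_r$ actually comes from.

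The paper then applies Frankl's inequality (\cref{thm:Frankl}) to get $\nu^*(C(S))\ge(n-k+1)/(r(1+2\beta_r-2\eps))>n/k$ for $k\ge s_r$. It lifts this to a perfect fractional matching of the generated $k$-graph with \cref{lem:matching-transfer}. Your duality idea does work at the link level once the ratio is known. Every fractional cover $y$ of $C(S)$ satisfies $e(C(S))\le\sum_v y_v\deg(v)\le|\bd C(S)|\sum_v y_v$, because $\deg(v)\le|\bd C(S)|$.

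\textbf{Aperiodicity is also unproved.} The claim that ``density above $1/2$ with $r$ odd forces closed walks of coprime lengths'' is not an argument. What is needed is a closed tight walk of length $1$ modulo $k$ in the $k$-graph. The paper gets it from \cref{lem:vicinity-toolkit}. Part (ii) with $\delta_0=1/2$ gives an arc. Part (iii) gives a switcher in each $C(S)$; this again needs the edge-to-shadow bound above, plus the check $\beta_r^2<1/(r+2)$, which ensures both $q_r$ and $1/(1+2\beta_r)$ exceed $r/(r+2\sqrt{r-1})$.

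Finally, you never address \textup{(F4)}. It follows as in your connectivity step, by choosing $S$ disjoint from $\{x,y\}$.
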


The theorem provides a common formula for all larger odd link
uniformities in the stated range. For example, $\delta_{k-5}^{\HC}(k)=\frac{11}{16}$ for $k\ge9$ and $\delta_{k-7}^{\HC}(k)=\frac{93}{128}$ for $k\ge11$.
Since $\beta_r=O(r^{-1/2})$, we have
$s_r=r+O(\sqrt r)$ as $r$ tends to infinity through odd integers.
The restriction $k\ge s_r$ arises only when we want to find a sufficiently large matching in each selected link component; it is not required for applying
\cref{cor:components}. 

Note that for even $r$, some $r$-uniform
links in the construction of Han and Zhao~\cite{HanZhao2016} have
largest tight components of density less than $1/2-c_r$, for some constant $c_r>0$ depending on $r$.
Therefore, the corresponding Hamilton problem for even
link uniformity presents a different obstruction, although \cref{thm:components} applies to even 
uniformities as well. The compatibility argument used in this paper relies on link components of density greater than $1/2$ to ensure that components
selected in different links intersect. Extending
our result to even link uniformities will therefore require a
different method for selecting components with the necessary
intersection and matching properties.

\subsection{Proof overview and organization}
We first present the proof overview of \cref{thm:components},
then present the proof overview for \cref{thm:main_all_odd}, and finally describe the organization of the paper.

\paragraph{The large tight component theorem.} 
The proof of \cref{thm:components} combines a bound on neighborhoods in Johnson graphs with induction on the uniformity. The induction
first produces a component with a large shadow; the neighborhood
bound then forces this component to contain many edges.
We break the proof into the following steps.

\medskip\noindent
\emph{Step 1. Tight components and Johnson neighborhoods.}
Recall that the Johnson graph $J(n,k)$ has vertex set
$\binom{[n]}k$, with two $k$-sets adjacent when they intersect in
$k-1$ vertices. Thus, the tight components of an $n$-vertex
$k$-graph $H$ are precisely the connected components of the subgraph
of $J(n,k)$ induced by $E(H)$.

Let $H_0$ be a tight component of $H$. Every $k$-set outside
$E(H_0)$ that is adjacent in $J(n,k)$ to an edge of $H_0$ must be a
missing edge of $H$; otherwise it would belong to the same tight
component. Consequently,
\[
N_{\mathrm J}(E(H_0))
\subseteq
\binom{V(H)}k\setminus E(H).
\]
This allows us to compare a lower bound on the Johnson neighborhood
of $E(H_0)$ with the upper bound
$(1-\mu(H))\binom nk$ on the number of missing edges.

\medskip\noindent
\emph{Step 2. An isoperimetric estimate in the Johnson graph.}
To estimate Johnson neighborhoods, we replace each $k$-set by all of
its orderings and apply Harper's vertex-isoperimetric theorem for
Hamming graphs~\cite{Harper1999,AkhtarJiangMiller2008}. This yields a
continuous function $\Phi_k$ such that every family
$\cA\subseteq\binom{[n]}k$ of density
$x=|\cA|/\binom nk$ satisfies
\begin{equation}\label{eq:Johnson-neighborhood-inequality}
\frac{|N_{\mathrm J}(\cA)|}{\binom nk}
\ge \Phi_k(x)-o(1).
\end{equation}
Roughly, for sufficiently large $n$,  $\Phi_k(x)$ is the asymptotically smallest neighborhood density
among families of $k$-sets with density $x$;
its formula is given in \eqref{eq:Phi-def}.
The main calculation is to determine this bound at density $1/2$.
For odd $r\ge3$, we prove $\Phi_r(1/2)=1-\widehat q_r$.
Moreover, we prove a crucial property of $\Phi_k$, $\Phi_k(x)>1-\widehat q_k$ for  $\widehat q_k-\frac12<x<\frac12$.
These are the neighborhood estimates needed in the induction.

\medskip\noindent
\emph{Step 3. Induction through links and shadows.}
We prove \cref{thm:components} by induction on $k$, starting with an
elementary argument for graphs. For $k\ge3$, let $H$ be a
$k$-graph with $\mu(H)\ge\widehat q_k$. Averaging over vertex links
gives a vertex $v$ such that
$\mu(L_H(v))
\ge \mu(H)
\ge \widehat q_k
\ge \widehat q_{k-1}$.
By induction, $L_H(v)$ contains a tight component with density
$1/2-o(1)$. The corresponding edges of $H$ lie in a single tight
component $H_0$, and hence $\mu(\bd H_0)\ge 1/2-o(1)$.

Let $H'$ be the
spanning subgraph with edge set $E(H)\setminus E(H_0)$. Since
distinct tight components have disjoint shadows,
$\mu(\bd H')\le1/2+o(1)$. Then the Kruskal--Katona
theorem~\cite{Kruskal1963} gives
$\mu(H')\le2^{-k/(k-1)}+o(1)<1/2$.
Unless $H_0$ already has density $1/2-o(1)$, these estimates place
$\mu(H_0)$ strictly between $\widehat q_k-1/2$ and $1/2$.
By Step~2, the Johnson neighborhood of $E(H_0)$ then has density
strictly greater than $1-\widehat q_k$. Step~1 shows that all these
sets are missing edges of $H$, contradicting
$\mu(H)\ge\widehat q_k$.

\paragraph{The tight Hamilton cycle theorem.}
For the proof of~\cref{thm:main_all_odd}, we use the
\emph{Hamilton framework} of Lang and
Sanhueza-Matamala~\cite{LangSanhueza2022,LangSanhuezaBandwidth},
which reduces the upper-bound problem to showing that the family of
hypergraphs satisfying the degree condition admits a Hamilton
framework. Let $r\ge3$ be odd, set $d=k-r$, and suppose that $H$ is a $k$-graph with $\delta_d(H)\ge(q_r+\eps)\binom{n-d}{r}$.
Then every $d$-link has density at least $q_r+\eps$. For each
$S\in\binom{V(H)}d$, we select the tight component $H_S$ of
$L_H(S)$ supplied by \cref{cor:components}. Since $\mu(H_S)\ge 1/2+\eps/4$,
any two selected components share an edge, providing the compatibility needed to combine the local
choices.
It remains to obtain the matching structure required by the Hamilton framework. 
Disjointness of component shadows yields a suitable lower bound on
$\mu(H_S)/\mu(\bd H_S)$. 
We apply Frankl's shadow--matching
inequality~\cite{Frankl2013} to obtain a fractional matching of size $n/k$ in each $H_S$, provided that $k\ge s_r$. A matching
transfer lemma (\cref{lem:matching-transfer}) then yields a perfect fractional matching in the
$k$-graph generated by the selected components.
The component
densities and the edge-to-shadow estimates, together with the
vicinity lemmas of Lang and
Sanhueza-Matamala~\cite{LangSanhuezaBandwidth}, also provide the
required switchers and an arc; see~\cref{lem:vicinity-toolkit}. These choices give a Hamilton framework for the family of $k$-graphs satisfying the degree condition and the bandwidth
theorem~\cite[Theorem~1.1]{LangSanhuezaBandwidth} gives $\delta_{k-r}^{\HC}(k)\le q_r$.
Together with the Han--Zhao construction, this proves
\cref{thm:main_all_odd}.

\paragraph{Organization.}
In \cref{sec:profile}, we define $\Phi_k$ and establish the Johnson
neighborhood inequality. In \cref{sec:analytic}, we prove that
$\Phi_r(1/2)=\beta_r$ for every odd $r\ge3$.
We prove \cref{thm:components,cor:components} in
\cref{sec:components} and \cref{thm:main_all_odd} in
\cref{sec:hamilton}.
We conclude with further questions in \cref{sec:conclusion}.

\section{Neighborhoods in Hamming and Johnson graphs}
\label{sec:profile}
Throughout this section, let $k\ge2$ be an integer and write
$[n]=\{1,\ldots,n\}$.
Recall that the \emph{Johnson graph} $J(n,k)$ has vertex set
$\binom{[n]}k$, with two vertices adjacent when their intersection has
size $k-1$. The \emph{Hamming graph} $\cH_{n,k}=K_n^{\square k}$ has
vertex set $[n]^k$, with two vertices adjacent when they differ in
exactly one coordinate. For a family $\cA$ of vertices in either graph,
we write $N_{\mathrm J}(\cA)$ or $N_{\mathrm H}(\cA)$ for its external
neighborhood, excluding $\cA$ itself. In particular,
\[
N_{\mathrm J}(\cA)=
\left\{e\in\binom{[n]}k\setminus\cA:
 |e\cap f|=k-1\text{ for some }f\in\cA\right\}.
\]

Our goal is to show that every family
$\cA\subseteq\binom{[n]}k$ of density $x\in(0,1)$ has a neighborhood
of density at least $\Phi_k(x)-o(1)$. We first define $\Phi_k$ using
weighted Hamming balls in the hypercube. We then use Harper's
vertex-isoperimetric theorem for Hamming graphs, in the finite-graph
form recorded in \cite[Proposition~6.1 and
Theorem~6.3]{AkhtarJiangMiller2008}, and finally pass from $k$-tuples to $k$-sets.

\subsection{The function \texorpdfstring{$\Phi_k$}{Phi-k}}

Let $Q_k=\{0,1\}^k$ be the hypercube. Recall that the \emph{Hamming weight} of
$\mathbf z=(z_1,\ldots,z_k)\in Q_k$ is $|\mathbf z|:=|\{i\in[k]:z_i=1\}|$.
 For $0\le t\le1$, and
$\mathbf z\in Q_k$, define the vertex weight $
w_t(\mathbf z)
:=t^{k-|\mathbf z|}(1-t)^{|\mathbf z|}.$
For $A\subseteq Q_k$, write $w_t(A):=\sum_{\mathbf z\in A}w_t(\mathbf z).$
Thus $w_t(Q_k)=1$.
For an integer $0\le s\le k-1$, let
\[
H_{k,s}:=\{\mathbf z\in Q_k:|\mathbf z|\le s\}.
\]
Thus $H_{k,s}$, equipped with the vertex weighting $w_t$, is a
\emph{weighted Hamming ball} of radius $s$ centered at $\mathbf0$. Write $N_{Q_k}$ for the neighborhood in the
hypercube. Then $N_{Q_k}(H_{k,s})$ consists
precisely of the vectors of Hamming weight $s+1$. Hence
\begin{align}
V_{k,s}(t)&:=w_t(H_{k,s})
 =\sum_{i=0}^s\binom ki t^{k-i}(1-t)^i,
\label{eq:V-def}\\
B_{k,s}(t)&:=w_t(N_{Q_k}(H_{k,s}))
 =\binom k{s+1}t^{k-s-1}(1-t)^{s+1}.
\label{eq:B-def}
\end{align}
For $0\le s\le k-1$, the function $V_{k,s}$ increases continuously
from $0$ to $1$, since
\begin{equation}\label{eq:V-derivative}
V'_{k,s}(t)=k\binom{k-1}s t^{k-s-1}(1-t)^s>0
\qquad(0<t<1).
\end{equation}
Hence, for every $x\in(0,1)$, there is a unique
$t_{k,s}(x)\in(0,1)$ such that
$V_{k,s}(t_{k,s}(x))=x$. The weighted Hamming ball $H_{k,s}$ has total weight $x$ and its neighborhood has total
weight $B_{k,s}(t_{k,s}(x))$.
The case $s=k-1$ has
$V_{k,k-1}(t)=1-(1-t)^k$ and
$B_{k,k-1}(t)=(1-t)^k$, so its neighborhood has total weight $1-x$. It can be omitted when
minimizing, since
\[
B_{k,0}(t)=kt^{k-1}(1-t)
\le (1-t)\sum_{i=0}^{k-1}t^i
=1-t^k=1-V_{k,0}(t).
\]
We therefore define
\begin{equation}\label{eq:Phi-def}
\Phi_k(x):=\min_{0\le s\le k-2}
 B_{k,s}\bigl(t_{k,s}(x)\bigr)
\qquad(0<x<1).
\end{equation}
Thus $\Phi_k(x)$ is the smallest neighborhood weight among weighted
Hamming balls of total weight $x$.
Each function $x\mapsto B_{k,s}(t_{k,s}(x))$ is continuous on
$(0,1)$, so $\Phi_k$ is continuous there. Taking $s=0$ gives $ 0\le\Phi_k(x)\le kx^{(k-1)/k}(1-x^{1/k}).$
Setting $\Phi_k(0)=\Phi_k(1)=0$ therefore makes $\Phi_k$ continuous,
and hence uniformly continuous, on $[0,1]$.

We now consider weighted Hamming balls in the finite Hamming graph
$\cH_{n,k}$. For a set $X\subseteq[n]$ with $|X|=m$ and an integer
$0\le s\le k-1$, define the \emph{weighted Hamming ball}
\begin{equation}\label{eq:finite-weighted-ball}
\cB_{n,k,s}(X):=
\left\{\mathbf v=(v_1,\ldots,v_k)\in[n]^k:
 |\{i\in[k]:v_i\notin X\}|\le s\right\}.
\end{equation}
Writing $t=m/n$, for $1\le m\le n-1$, its neighborhood
consists precisely of the tuples with exactly $s+1$ coordinates
outside $X$. Hence
\begin{equation}\label{eq:finite-ball-counts}
\frac{|\cB_{n,k,s}(X)|}{n^k}=V_{k,s}(m/n),
\qquad
\frac{|N_{\mathrm H}(\cB_{n,k,s}(X))|}{n^k}=B_{k,s}(m/n).
\end{equation}

Harper's vertex-isoperimetric theorem for Hamming graphs gives the
following lower bound; see \cite[Theorem~1]{Harper1999} and
\cite[Proposition~6.1 and Theorem~6.3]{AkhtarJiangMiller2008}.

\begin{theorem}[{\cite{Harper1999,AkhtarJiangMiller2008}}]
\label{thm:Hamming-Harper}
For all integers $n,k\ge2$ and every $\F\subseteq[n]^k$,
\begin{equation}\label{eq:Hamming-boundary}
\frac{|N_{\mathrm H}(\F)|}{n^k}
\ge
\Phi_k\left(\frac{|\F|}{n^k}\right).
\end{equation}
\end{theorem}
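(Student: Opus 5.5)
The plan is to derive \cref{thm:Hamming-Harper} from Harper's vertex-isoperimetric theorem for $K_n^{\square k}$, as recorded in \cite[Theorem~1]{Harper1999} and \cite[Proposition~6.1 and Theorem~6.3]{AkhtarJiangMiller2008}. That result has two parts. First, some nested family of initial segments of a suitable order on $[n]^k$ minimises $|N_{\mathrm H}(\F)|$ among families of a given size. Second, the resulting boundary function is bounded below by the infimum over weighted Hamming balls: the quotient/fractional-subcube reduction identifies $K_n^{\square k}$ with $Q_k$ carrying the product weight $w_t$, where $t$ is the fraction of each coordinate set lying in a one-dimensional ``inner'' set. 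The task is then to match the quantity produced there with our $\Phi_k$, which is the minimum over $s$ of $B_{k,s}(t_{k,s}(x))$.

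\textbf{Step 1 (reduction to weighted cube).} Via compression (stabilisation) in each coordinate, I would replace $\F$ by a family of the same size whose neighbourhood is no larger and which is a down-set with respect to a fixed linear order of $[n]$ in each coordinate. Each coordinate of such a family looks like $[m]$. Using Akhtar--Jiang--Miller's Theorem~6.3, the optimal neighbourhood size for $|\F|=xn^k$ is at least the minimum, over weighted balls of weight $x$ in $(Q_k,w_t)$ with $t\in[0,1]$ allowed to be real, of the weight of the outer boundary. Here the real value of $t$ comes from their continuous relaxation, which lower bounds the discrete problem. \textbf{Step 2 (ball shape).} A weighted ball centred at $\mathbf 0$ of weight exactly $x$ may be a full layer set $H_{k,s}$ with $t=t_{k,s}(x)$, or a partial layer. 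I would use the fact from \cite{AkhtarJiangMiller2008} that the extremal values occur at the endpoints where the ball is exactly $H_{k,s}$, and that the boundary weight along the interpolating family is concave or piecewise monotone between them. \textbf{Step 3 (discarding $s=k-1$).} The inequality displayed before \eqref{eq:Phi-def} shows that $s=k-1$ never gives a smaller value than $s=0$, so the minimum over $0\le s\le k-2$ equals the full minimum. This yields \eqref{eq:Hamming-boundary}.

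The main obstacle is Step 2, namely verifying that the lower bound of the cited theorem is stated, or can be restated, exactly as a minimum over complete weighted balls, with no intermediate partial-layer configurations. If the reference's formulation involves a supremum over $t$ of a different expression, I would try first to reparametrise it. Setting $x=V_{k,s}(t)$ shows that any admissible $t$ in their formula corresponds to the ball $H_{k,s}$ with boundary $B_{k,s}(t)$, using \eqref{eq:finite-ball-counts} for rational $t=m/n$ and then continuity of $\Phi_k$ in $x$ to handle the real values of $t$.
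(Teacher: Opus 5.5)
Your plan matches the paper's: the paper gives no independent argument for this statement but quotes it directly from Harper~\cite{Harper1999} and \cite[Proposition~6.1 and Theorem~6.3]{AkhtarJiangMiller2008}, and its only local contribution is your Step~3, the observation before \eqref{eq:Phi-def} that the radius $s=k-1$ never gives a smaller value than $s=0$. Your reconstruction in Steps~1--2 is not needed and should not be relied on as written. Coordinatewise compression only produces a down-set in $[n]^k$, not a set cut at a common $[m]$ in every coordinate, so by itself it does not yield the one-parameter weighted cube $(Q_k,w_t)$. Likewise, \eqref{eq:finite-ball-counts} only evaluates specific balls, so it gives upper rather than lower bounds on the minimal neighbourhood.
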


We now prove the Johnson neighborhood inequality~\eqref{eq:Johnson-neighborhood-inequality}
from \cref{thm:Hamming-Harper} by passing from $k$-sets to their
orderings in the Hamming graph.

\begin{lemma}\label{lem:Johnson}
For every integer $k\ge2$ and every $\eps>0$, there exists
$n_0\in \mathbb N$ such that the following holds for all $n\ge n_0$.
For every $\varnothing\ne\cA\subsetneq\binom{[n]}k$, writing
$x:=|\cA|/\binom nk$, we have
\begin{equation}\label{eq:Johnson-profile}
\frac{|N_{\mathrm J}(\cA)|}{\binom nk}\ge\Phi_k(x)-\eps.
\end{equation}
\end{lemma}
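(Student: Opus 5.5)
Given $\cA\subseteq\binom{[n]}k$, let $\F\subseteq[n]^k$ be the set of all $k$-tuples with distinct coordinates whose underlying set lies in $\cA$. Then $|\F|=k!\,|\cA|$. Since $k!\binom nk=n^k(1-O_k(1/n))$, the density is
\[
y:=|\F|/n^k=x\bigl(1-O_k(n^{-1})\bigr).
\]
By \cref{thm:Hamming-Harper}, $|N_{\mathrm H}(\F)|\ge \Phi_k(y)\,n^k$. Since $|y-x|=O_k(1/n)$ and $\Phi_k$ is uniformly continuous on $[0,1]$, we have $\Phi_k(y)\ge\Phi_k(x)-\eps/2$ once $n$ is large. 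The same $n_0$ works for all $x$, because the uniform continuity modulus does not depend on $x$.

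Next I compare $N_{\mathrm H}(\F)$ with $N_{\mathrm J}(\cA)$. Take a tuple $\mathbf v\in N_{\mathrm H}(\F)$. It differs from some $\mathbf u\in\F$ in exactly one coordinate. There are two cases.
\begin{itemize}
\item If $\mathbf v$ has a repeated coordinate, it lies in the set $D$ of non-injective tuples, and $|D|\le\binom k2 n^{k-1}=O_k(n^{k-1})$.
\item If $\mathbf v$ has distinct coordinates, its underlying set $e$ satisfies $|e\cap f|=k-1$, where $f$ is the underlying set of $\mathbf u$ and $f\in\cA$. Moreover $e\notin\cA$: otherwise $\mathbf v\in\F$, contradicting that $N_{\mathrm H}$ is the external neighbourhood. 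Hence $e\in N_{\mathrm J}(\cA)$.
\end{itemize}
Each $e\in N_{\mathrm J}(\cA)$ arises from exactly $k!$ orderings. Therefore
\[
|N_{\mathrm H}(\F)|\le k!\,|N_{\mathrm J}(\cA)|+O_k(n^{k-1}).
\]

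Dividing by $k!\binom nk=n^k(1-O(1/n))$ gives
\[
\frac{|N_{\mathrm J}(\cA)|}{\binom nk}\ \ge\ \frac{|N_{\mathrm H}(\F)|}{n^k}-O_k(n^{-1})\ \ge\ \Phi_k(x)-\tfrac{\eps}{2}-O_k(n^{-1})\ \ge\ \Phi_k(x)-\eps
\]
for $n\ge n_0(k,\eps)$.

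The only step needing care is making the error terms uniform in $\cA$. This is handled by the uniform continuity of $\Phi_k$ on $[0,1]$, including the endpoints $\Phi_k(0)=\Phi_k(1)=0$ set earlier, together with the explicit $O_k(1/n)$ bound on the non-injective tuples. The case where $y$ is close to $0$ or $1$ is covered by the same continuity. No issue arises from $\F$ being a proper nonempty subset, since \cref{thm:Hamming-Harper} holds for every $\F$.
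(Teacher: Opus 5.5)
Your proposal is correct and follows essentially the same route as the paper's proof. Both pass to the distinct-entry orderings $\widehat{\cA}$, apply \cref{thm:Hamming-Harper}, bound the non-injective tuples by $\binom k2 n^{k-1}$, and use $k!\binom nk\le n^k$ together with uniform continuity of $\Phi_k$ on $[0,1]$ to get an error uniform in $\cA$.
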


\begin{proof}
Let $\cA\subseteq\binom{[n]}k$ be as in the statement, and let
\[
\widehat{\cA}:=\{(v_1,\ldots,v_k)\in[n]^k:
 v_1,\ldots,v_k\text{ are distinct and }
 \{v_1,\ldots,v_k\}\in\cA\}.
\]
Then $|\widehat{\cA}|=k!|\cA|$.
The tuples with distinct entries in $N_{\mathrm H}(\widehat{\cA})$
are precisely the orderings of members of $N_{\mathrm J}(\cA)$.
There are at most $\binom k2 n^{k-1}$ tuples with repeated entries.
Applying \cref{thm:Hamming-Harper} to $\widehat{\cA}$, we obtain
\[
n^k\Phi_k\left(\frac{k!|\cA|}{n^k}\right)
\le |N_{\mathrm H}(\widehat{\cA})|
\le k!|N_{\mathrm J}(\cA)|+\binom k2 n^{k-1}.
\]
Since $k!\binom nk\le n^k$, it follows that
\[
\frac{|N_{\mathrm J}(\cA)|}{\binom nk}
\ge\Phi_k\left(\frac{k!|\cA|}{n^k}\right)-O_k(n^{-1}).
\]
Moreover, $k!|\cA|/n^k=x+O_k(n^{-1})$, uniformly over $x\in(0,1)$.
The uniform continuity of $\Phi_k$ on $[0,1]$ implies that the
last expression is at least $\Phi_k(x)-\eps$ when $n$ is sufficiently large.
\end{proof}

The bound in \cref{lem:Johnson} is asymptotically sharp.
Fix $x\in(0,1)$, choose $s$ attaining the minimum in
\eqref{eq:Phi-def}, and let $t=t_{k,s}(x)$.
Take $X\subseteq[n]$ with $|X|=\lfloor tn\rfloor$ and let
$\cA=\{e\in\binom{[n]}k:|e\setminus X|\le s\}$.
For sufficiently large $n$, its neighborhood consists precisely
of the $k$-sets with $s+1$ vertices outside $X$.
Thus $|\cA|/\binom nk\to V_{k,s}(t)=x$ and
$|N_{\mathrm J}(\cA)|/\binom nk\to B_{k,s}(t)=\Phi_k(x)$
as $n\to\infty$.

\subsection{Two inequalities for \texorpdfstring{$\Phi_k$}{Phi-k}}
We need two further inequalities in \cref{sec:components}.
The first is used in the even steps of the induction.

\begin{lemma}\label{lem:cylinder}
For every integer $k\ge2$ and every $x\in(0,1)$,
$\Phi_k(x)\ge\Phi_{k+1}(x)$.
\end{lemma}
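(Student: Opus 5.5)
We prove the inequality by a cylinder construction in the finite Hamming graph, followed by a limit. Fix $k\ge2$ and $x\in(0,1)$. Choose $s\in\{0,\ldots,k-2\}$ attaining the minimum in \eqref{eq:Phi-def}, so that $\Phi_k(x)=B_{k,s}(t)$ with $t:=t_{k,s}(x)\in(0,1)$. The idea is that the ball $H_{k,s}$ extended by a free extra coordinate has the same density and the same neighborhood density in dimension $k+1$. \cref{thm:Hamming-Harper} in dimension $k+1$ then bounds that neighborhood below by $\Phi_{k+1}$.

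\emph{Step 1 (the cylinder).} Take $n$ large and $X\subseteq[n]$ with $|X|=m:=\lfloor tn\rfloor$, so that $1\le m\le n-1$ for large $n$. Define
\[
\F:=\cB_{n,k,s}(X)\times[n]\subseteq[n]^{k+1}.
\]
By \eqref{eq:finite-ball-counts}, $|\F|/n^{k+1}=V_{k,s}(m/n)$.

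\emph{Step 2 (its neighborhood).} We claim that $N_{\mathrm H}(\F)=N_{\mathrm H}(\cB_{n,k,s}(X))\times[n]$. Take a tuple $(\mathbf v,a)\notin\F$ that is adjacent to $(\mathbf u,b)\in\F$. The two tuples differ in exactly one coordinate. If that coordinate were the last one, then $\mathbf v=\mathbf u\in\cB_{n,k,s}(X)$ and $(\mathbf v,a)\in\F$, a contradiction. Hence $a=b$, and $\mathbf v\notin\cB_{n,k,s}(X)$ is adjacent to $\mathbf u$ in $\cH_{n,k}$. The reverse inclusion is immediate. By \eqref{eq:finite-ball-counts}, this gives $|N_{\mathrm H}(\F)|/n^{k+1}=B_{k,s}(m/n)$.

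\emph{Step 3 (Harper and the limit).} Applying \cref{thm:Hamming-Harper} to $\F\subseteq[n]^{k+1}$ yields
\[
B_{k,s}(m/n)\ge\Phi_{k+1}\bigl(V_{k,s}(m/n)\bigr).
\]
Now let $n\to\infty$, so that $m/n\to t$. Both $V_{k,s}$ and $B_{k,s}$ are polynomials, hence continuous, and $\Phi_{k+1}$ is continuous on $[0,1]$ as noted after \eqref{eq:Phi-def}. Passing to the limit gives
\[
\Phi_k(x)=B_{k,s}(t)\ge\Phi_{k+1}\bigl(V_{k,s}(t)\bigr)=\Phi_{k+1}(x).
\]

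The only point needing care is the neighborhood identity in Step~2, specifically ruling out neighbors obtained by changing the free last coordinate. The limiting argument avoids having to compare the continuous weighted profiles $\Phi_k$ and $\Phi_{k+1}$ directly. The same argument could instead be run in the weighted hypercube, but that would need a weighted Harper theorem for $Q_{k+1}$, which the paper has not stated. Routing through the finite Hamming graph uses only \cref{thm:Hamming-Harper}.
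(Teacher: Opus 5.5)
Your proposal is correct and is essentially the paper's own proof. The paper likewise takes a minimizing $s$, forms the cylinder $\cB_{n,k,s}(X_n)\times[n]\subseteq[n]^{k+1}$, observes that its neighborhood is $N_{\mathrm H}(\cB_{n,k,s}(X_n))\times[n]$, applies \cref{thm:Hamming-Harper} in dimension $k+1$, and passes to the limit using the continuity of $\Phi_{k+1}$; your Step~2 simply spells out the neighborhood identity in more detail.
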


\begin{proof}
Choose $s$ attaining the minimum in \eqref{eq:Phi-def}, and let
$t=t_{k,s}(x)$. For each $n$, take $X_n\subseteq[n]$ with
$|X_n|=\lfloor tn\rfloor$ and let $\cB_n:=\cB_{n,k,s}(X_n)\subseteq[n]^k.$
By \eqref{eq:finite-ball-counts},
\[
\frac{|\cB_n|}{n^k}\to V_{k,s}(t)=x
~~\text{and}~~
\frac{|N_{\mathrm H}(\cB_n)|}{n^k}
\to B_{k,s}(t)=\Phi_k(x).
\]
Now let $\cC_n:=\cB_n\times[n]\subseteq[n]^{k+1}.$
Then $|\cC_n|/n^{k+1}=|\cB_n|/n^k$
and $N_{\mathrm H}(\cC_n)=N_{\mathrm H}(\cB_n)\times[n],$ since changing only the last coordinate cannot enter or leave
$\cB_n\times[n]$. Applying \cref{thm:Hamming-Harper} in dimension
$k+1$ gives
\[
\Phi_{k+1}\left(\frac{|\cB_n|}{n^k}\right)
\le
\frac{|N_{\mathrm H}(\cB_n)|}{n^k}.
\]
Letting $n\to\infty$ and using the continuity of $\Phi_{k+1}$ yields
$\Phi_k(x)\ge\Phi_{k+1}(x)$.
\end{proof}

The next lemma gives a lower bound on $\Phi_k(x)$ for
$1/2-\beta\le x\le1/2$, assuming that $\Phi_k(1/2)\ge\beta$.

\begin{lemma}\label{lem:reflection}
Suppose $k\ge2$, $0<\beta<1/2$, and $\Phi_k(1/2)\ge\beta$.
For $1/2-\beta\le x\le1/2$, we have
\begin{equation}\label{eq:reflection-quantitative}
\Phi_k(x)\ge\beta+
\frac{(x-1/2+\beta)(1/2-x)}{2k(k-1)}.
\end{equation}
In particular, $\Phi_k(x)>\beta$ whenever
$1/2-\beta<x<1/2$.
\end{lemma}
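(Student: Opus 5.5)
The plan is to combine a concavity property of the branches defining $\Phi_k$ with a reflection (complementation) argument, and to treat the endpoint $x=1/2-\beta$ as the main obstacle. Write $f_s(x):=B_{k,s}(t_{k,s}(x))$, so that $\Phi_k=\min_{0\le s\le k-2}f_s$.

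\emph{Step 1: quantitative concavity of each branch.} By the chain rule, $f_s'(x)=B_{k,s}'(t)/V_{k,s}'(t)$ with $t=t_{k,s}(x)$. Differentiating \eqref{eq:B-def} and dividing by \eqref{eq:V-derivative} should give, after cancelling the common factor $t^{k-s-2}(1-t)^s$ and using $\binom k{s+1}(s+1)=k\binom{k-1}s$,
\[
f_s'(x)=\frac{(k-s-1)(1-t)-(s+1)t}{t}=\frac{k-s-1}{t}-k .
\]
This is strictly decreasing in $t$, and $t$ is increasing in $x$, so each $f_s$ is concave. Differentiating once more gives
\[
f_s''(x)=-\frac{k-s-1}{t^2}\cdot\frac1{V'_{k,s}(t)} .
\]
Since $V'_{k,s}(t)=k\cdot\binom{k-1}s t^{k-s-1}(1-t)^s\le k$ (it is $k$ times a binomial probability), $k-s-1\ge1$ and $t\le1$, we get $f_s''\le-1/k$. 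This is stronger than needed; the stated constant $2k(k-1)$ leaves slack, so a cruder estimate also suffices. Hence every $f_s$, and therefore $\Phi_k$ as a minimum of such functions, satisfies
\[
\Phi_k(x)\ \ge\ \text{(linear interpolation of }\Phi_k\text{ at }a,b)+\frac{(x-a)(b-x)}{2k}
\]
on any interval $[a,b]\subseteq(0,1)$. I will pass to this interpolation statement, rather than working with the minimum directly, because a minimum of strongly concave functions is strongly concave.

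\emph{Step 2: the endpoints.} Apply Step 1 with $a=1/2-\beta$ and $b=1/2$. The hypothesis gives $\Phi_k(1/2)\ge\beta$, so it remains to show $\Phi_k(1/2-\beta)\ge\beta$; the claimed bound \eqref{eq:reflection-quantitative} then follows immediately. For the endpoint I would use the reflection behind the lemma's name. Take $\F\subseteq[n]^k$ of weight $\approx x_0:=1/2-\beta$ whose neighbourhood has weight $b_0\approx\Phi_k(x_0)$; the ball sequence from the sharpness remark after \cref{lem:Johnson} supplies such $\F$, and \cref{thm:Hamming-Harper} controls everything in the limit. Let $\mathcal C$ be the complement of $\F\cup N_{\mathrm H}(\F)$, which has weight $1-x_0-b_0$ and satisfies $N_{\mathrm H}(\mathcal C)\subseteq N_{\mathrm H}(\F)$. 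Suppose $b_0<\beta$. Then $\mathcal C$ has weight greater than $1/2$, so we can choose $\mathcal D\subseteq\mathcal C$ of weight $1/2$. Its neighbourhood lies in $N_{\mathrm H}(\F)\cup(\mathcal C\setminus\mathcal D)$, whose weight is at most $b_0+(1/2-x_0-b_0)=\beta$.

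\emph{Main obstacle.} This comparison alone yields only $\Phi_k(1/2)\le\beta$, which is consistent with the hypothesis and produces no contradiction. The same argument does give the cheap bound $\Phi_k(x)\ge 1/2-x$ throughout, but at the endpoint this is exactly $\beta$ only in the limit. To obtain strictness I would first try to exploit that $\mathcal D$ may be chosen as a weighted Hamming ball or anti-ball. Concretely, I would remove from $\mathcal C$ part of its outermost layer (adjacent to the already-counted set $N_{\mathrm H}(\F)$) rather than an arbitrary part, so that the removed set is not entirely new boundary. Alternatively, I would apply Step 1's strong concavity along the interval $[x_0,\,1-x_0-b_0]$, using the values $\Phi_k(1-x_0-b_0)\le b_0$ and $\Phi_k(1/2)\ge\beta$, to contradict $b_0<\beta$. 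If both attempts fail, I would fall back on proving the endpoint inequality $\Phi_k(1/2-\beta)\ge\beta$ directly from the explicit formulas for the $f_s$.
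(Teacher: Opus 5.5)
Your Step~2 never proves the endpoint inequality $\Phi_k(1/2-\beta)\ge\beta$. Once Step~1 is in place, that inequality is the whole content of the lemma, and the proposal stops at a list of attempts. Attempt (b) cannot succeed: strong concavity only bounds interior values \emph{from below} by the chord. The data $\Phi_k(x_0)=b_0<\beta$, $\Phi_k(1/2)\ge\beta$, $\Phi_k(1-x_0-b_0)\le b_0$ put the interior value above both endpoint values, which is perfectly consistent with concavity. The obstacle is nevertheless illusory, because you only need the non-strict inequality and your own complement argument gives it. Take $x\in(1/2-\beta,1/2)$ and suppose $\Phi_k(x)<1/2-x$. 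Run your construction on the ball sequence $\mathcal B_{n,k,s}(X_n)\subseteq[n]^k$ realizing $\Phi_k(x)$, choosing $\mathcal D_n\subseteq\mathcal C_n$ with $|\mathcal D_n|=\lceil n^k/2\rceil$. Then \cref{thm:Hamming-Harper} and continuity give $\Phi_k(1/2)\le 1/2-x<\beta$, a contradiction. Hence $\Phi_k(x)\ge 1/2-x$ on $(1/2-\beta,1/2)$. This holds only there, not ``throughout'', since $\Phi_k(x)\to0$ as $x\to0$. Continuity of $\Phi_k$ at $1/2-\beta$ then yields $\Phi_k(1/2-\beta)\ge\beta$, and your interpolation from Step~1 finishes the proof.

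The paper instead works branch by branch and needs neither Harper's theorem nor a limit. Fix $s$ and set $s'=k-2-s$. The complement of $H_{k,s}\cup N_{Q_k}(H_{k,s})$ is the radius-$s'$ ball around $\mathbf 1$, which gives $B_{k,s}(t)=B_{k,s'}(1-t)$ and $V_{k,s}(t)+B_{k,s}(t)+V_{k,s'}(1-t)=1$. Choose $u$ with $V_{k,s'}(1-u)=1/2$ and set $x_0=V_{k,s}(u)$. Then $f_s(x_0)=f_{s'}(1/2)\ge\Phi_k(1/2)\ge\beta$ and $x_0=1/2-f_s(x_0)\le 1/2-\beta$. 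So the strongly concave branch $f_s$ is at least $\beta$ at both ends of $[x_0,1/2]\supseteq[1/2-\beta,1/2]$, and minimizing over $s$ finishes. This is your anti-ball idea (a) run in reverse. It starts from weight $1/2$ on the complementary branch, where the two sets share their boundary exactly, instead of shrinking $\mathcal C$ and paying for new boundary.

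Step~1 also has a slip. Since $\binom k{s+1}=\frac{k}{s+1}\binom{k-1}s$, the ratio $B'_{k,s}/V'_{k,s}$ carries a factor $1/(s+1)$. So $f_s'(x)=\frac{k-s-1}{(s+1)t}-\frac{k}{s+1}$ and $f_s''(x)=-\frac{k-s-1}{(s+1)t^2V'_{k,s}(t)}$. Your crude estimates then give $f_s''\le -1/(k(k-1))$ via $(k-s-1)/(s+1)\ge 1/(k-1)$. That is exactly the constant $2k(k-1)$ in the statement, so there is no slack.
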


\begin{proof}
Fix $0\le s\le k-2$, let $s'=k-s-2$, and write
$f(x)=B_{k,s}(t_{k,s}(x))$.
By \eqref{eq:V-def} and \eqref{eq:B-def}, we have 
$B_{k,s}(t)=B_{k,s'}(1-t)$ and $V_{k,s}(t)+B_{k,s}(t)+V_{k,s'}(1-t)=1$.

Choose $u\in(0,1)$ such that $V_{k,s'}(1-u)=1/2$, and let
$x_0=V_{k,s}(u)$.
Then $t_{k,s}(x_0)=u$ and $t_{k,s'}(1/2)=1-u$.
The first identity and the definition of $\Phi_k$ give
\[
f(x_0)=B_{k,s}(u)=B_{k,s'}(1-u)\ge\Phi_k(1/2)\ge\beta.
\]
The second identity gives $x_0+f(x_0)+1/2=1$, so
$x_0=1/2-f(x_0)\le1/2-\beta$.
Moreover, $f(1/2)\ge\Phi_k(1/2)\ge\beta$.

Writing $t=t_{k,s}(x)$ and differentiating gives
\[
f''(x)=-\frac{k-s-1}{(s+1)t^2V'_{k,s}(t)}
\le-\frac1{k(k-1)},
\]
where we use $V'_{k,s}(t)\le k$ from \eqref{eq:V-derivative},
$t\le1$, and $(k-s-1)/(s+1)\ge1/(k-1)$.
Thus, $f(x)+x^2/(2k(k-1))$ is concave. Since
$f(x_0),f(1/2)\ge\beta$ and $x_0\le1/2-\beta$, concavity gives
\[
\begin{aligned}
f(x)&\ge\beta+\frac{(x-x_0)(1/2-x)}{2k(k-1)}\\
&\ge\beta+\frac{(x-1/2+\beta)(1/2-x)}{2k(k-1)}
\end{aligned}
\]
for $1/2-\beta\le x\le1/2$.
Taking the minimum over $s$ proves \eqref{eq:reflection-quantitative}.
\end{proof}

\section{The neighborhood volume at density \texorpdfstring{$1/2$}{1/2}}
\label{sec:analytic}

In this section, we determine $\Phi_r(1/2)$ for odd $r$.

\begin{theorem}\label{thm:half}
For every odd $r=2m+1\ge3$,
\[
\Phi_r(1/2)=2^{-r}\binom rm=\beta_r.
\]
\end{theorem}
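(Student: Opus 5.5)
The plan is to prove the matching upper and lower bounds separately, using the binomial interpretation of the weights. Write $p=1-t$ and let $X\sim\Bin(r,p)$. Then
\[
V_{r,s}(t)=\Pr[X\le s],\qquad B_{r,s}(t)=\Pr[X=s+1].
\]
So $B_{r,s}(t_{r,s}(1/2))$ is the point mass at $s+1$ of the binomial whose parameter $p_s$ makes $s$ a median: $\Pr[X\le s]=1/2$.

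\emph{Upper bound.} Take $s=m$ and $t=1/2$. Since $r=2m+1$ is odd, symmetry of $\Bin(r,1/2)$ gives $V_{r,m}(1/2)=1/2$, so $t_{r,m}(1/2)=1/2$. Then
\[
B_{r,m}(1/2)=\binom r{m+1}2^{-r}=\binom rm 2^{-r}=\beta_r,
\]
which gives $\Phi_r(1/2)\le\beta_r$.

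\emph{Lower bound.} We must show $g(s):=\Pr_{p_s}[X=s+1]\ge\beta_r$ for every $0\le s\le r-2$. I would pass to the order-statistic representation
\[
\Pr[\Bin(r,p)\le s]=\Pr[U_{(s+1)}>p],
\]
where $U_{(s+1)}\sim\Beta(s+1,r-s)$ is the $(s+1)$-th order statistic of $r$ uniforms. Thus $p_s$ is the median of $\Beta(s+1,r-s)$. Writing $f_s$ for its density $r\binom{r-1}s p^s(1-p)^{r-1-s}$, we get
\[
g(s)=\frac{p_s f_s(p_s)}{s+1}=\frac{(1-p_s)f_s(p_s)}{r-s-1}.
\]
So the lower bound becomes a statement about the density of a Beta distribution at its median. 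The reflection $s\mapsto s'=r-2-s$ used in \cref{lem:reflection}, namely $B_{r,s}(t)=B_{r,s'}(1-t)$ together with $V_{r,s}(t)+B_{r,s}(t)+V_{r,s'}(1-t)=1$, lets me restrict to $s\ge m$, or equivalently $p_s\ge1/2$. This reduction needs care, because reflection does not send the median parameter for $s$ to the median parameter for $s'$. I would instead use it as in \cref{lem:reflection}: it shows that at $t=1-p_{s'}$ the $s$-ball has mass at most $1/2-g(s')$, and I would combine this with monotonicity of $V_{r,s}$ in $t$.

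\emph{Key step.} For $s\ge m$, I would show that $g(s)$ is nondecreasing in $s$, so that the minimum is $g(m)=\beta_r$. The tools are the standard median bounds for the binomial (Kaas–Buhrman), which say that $rp_s$ lies within $\ln 2$ of $s$ and more precisely in $[s,s+1]$, together with log-concavity of the binomial pmf in $s$. Concretely, I would compare $g(s+1)$ with $g(s)$ by viewing $p\mapsto\Pr_p[X=j]$ along the curve $p=p_s$, and use the derivative identity
\[
\frac{d}{dp}\Pr_p[X\le s]=-r\Pr_{\Bin(r-1,p)}[\,\cdot=s\,].
\]
This identity controls how far $p_s$ moves as $s$ increases by one.

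\emph{Main obstacle.} Comparing different $s$ is the main difficulty: the parameters $p_s$ are defined only implicitly. If the monotonicity route becomes messy, my fallback is a direct inequality. With $p=p_s\ge1/2$ and $\Pr[X\le s]=1/2$, the binomial is unimodal with mode near $s$, so the mass above $s$ is at most roughly $(r-s)$ times a geometric tail. That should force $\Pr[X=s+1]\gtrsim$ the central value $\binom rm2^{-r}$. The exact constant is the delicate part, and I would check it against small cases ($r=3,5$) to confirm that the extremum sits at $s=m$.
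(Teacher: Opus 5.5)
You have a genuine gap: the reduction to $s\ge m$ fails, and it skips exactly the range where the theorem's difficulty lies. Your upper bound, and your reformulation of $g(s)=b_{r,s}$ as $\Pr[\Bin(r,p_{r,s})=s+1]$ with $p_{r,s}$ the median of $\Beta(s+1,r-s)$, agree with the paper.

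\textbf{Why the reduction fails.} The identities you take from the proof of \cref{lem:reflection} (with $s'=r-2-s$) give only $V_{r,s}(p_{r,s'})=\frac12-b_{r,s'}$. Hence $p_{r,s'}<t_{r,s}(1/2)=1-p_{r,s}$. However, $b_{r,s}=B_{r,s}(1-p_{r,s})$ and $b_{r,s'}=B_{r,s}(p_{r,s'})$, and $B_{r,s}(t)\propto t^{r-s-1}(1-t)^{s+1}$ peaks at $t=(r-s-1)/r$. That peak lies strictly between the two arguments, because $p_{r,s}<(s+1)/r$ and $p_{r,s'}<(s'+1)/r$. So monotonicity of $V_{r,s}$ gives no comparison between $b_{r,s}$ and $b_{r,s'}$.

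Worse, for $s=m-1$ the partner is $s'=m$. The comparison you would need, $b_{r,m-1}\ge b_{r,m}=\beta_r$, is itself the hardest case of the theorem.

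The symmetry that actually holds for the medians is $p_{r,s}=1-p_{r,r-1-s}$. Combined with the median--mean inequality, it gives $b_{r,s}\ge b_{r,r-1-s}$ for $s\ge m$. So the large-$s$ side is the dominated one, and the reduction goes toward $s\le m$; this is the paper's Case~3. In addition, $s=0$ has no partner under this symmetry. For $r=3$, where $s\in\{0,1\}$ and $m=1$, your plan proves nothing beyond the trivial value $s=m$. The substantive range $0\le s\le m-1$ is therefore untreated.

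\textbf{Why your tools are too coarse there.} Since $\binom rm p^m(1-p)^{m+1}$ increases for $p<m/r$, even the bound $p_{r,m-1}\ge(m-\frac12)/r=\frac12-\frac1r$ yields only
$b_{r,m-1}\ge\beta_r(1-\frac2r)^{(r-1)/2}(1+\frac2r)^{(r+1)/2}$.
That factor is strictly below $1$, since its logarithm equals $\sum_{k\ge2}\frac{1-k}{k(2k-1)}(2/r)^{2k-1}<0$. The Kaas--Buhrman and $\ln 2$ median bounds you cite are weaker still. Meanwhile, the true margin $b_{r,m-1}/\beta_r-1$ is only of order $r^{-2}$, and your geometric-tail fallback cannot resolve a margin that small.

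This is why the paper needs two finer inputs. For $s=m-1$ it proves the second-order estimate $p_{r,m-1}\ge\frac12-\frac1{r+1/4}$ by a direct integral bound for $\Beta(m,m+2)$. For $s\le m-2$ it combines $p_{r,s}\ge(s+\frac12)/r$ from \cref{lem:median} with monotonicity in $s$ of the resulting lower bound $f_r(s)$.

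Even on your side $s\ge m$, the median bounds you list fail at $s=m+1$ for large $r$. There you need the Beta median--mean inequality $p_{r,s}\ge(s+1)/(r+1)$.

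Finally, a minor error: $(1-p)f_s(p)/(r-s)=\Pr[X=s]$, so your second expression for $g(s)$ is incorrect.
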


Fix $r=2m+1\ge3$. By \eqref{eq:V-def} and \eqref{eq:B-def}, $V_{r,m}(1/2)=\frac12
~\text{and}~
B_{r,m}(1/2)=2^{-r}\binom r{m+1}=\beta_r.$
Thus $t_{r,m}(1/2)=1/2$ and $\Phi_r(1/2)\le\beta_r$.
To prove the reverse inequality, we compare the neighborhood
weights of all weighted Hamming balls of total weight $1/2$.

For $0\le s\le r-2$, let $p_{r,s}:=1-t_{r,s}(1/2)$, and write
$\Bin(r,p)$ for a binomial random variable with parameters $r$ and $p$.
By the definition of $t_{r,s}(1/2)$,
$\Pr(\Bin(r,p_{r,s})\le s) =V_{r,s}(1-p_{r,s})=\frac12.$
Define
\[
b_{r,s}:=B_{r,s}(t_{r,s}(1/2))
=\Pr(\Bin(r,p_{r,s})=s+1)=\binom r{s+1}p_{r,s}^{\,s+1}(1-p_{r,s})^{r-s-1}.
\]
Consequently,
\begin{equation}\label{eq:half-binomial}
\Phi_r(1/2)=\min_{0\le s\le r-2}b_{r,s}.
\end{equation}
We have $p_{r,m}=1/2$ and $b_{r,m}=\beta_r$.
We will show that $b_{r,s}>\beta_r$ whenever $s\ne m$.

To estimate $p_{r,s}$, let $U$ have the beta distribution
$\Beta(s+1,r-s)$, whose density on $(0,1)$ is proportional to
$u^s(1-u)^{r-s-1}$.
For every $p\in(0,1)$, we have
\[
\Pr(\Bin(r,p)\ge s+1)
=\frac{\int_0^p u^s(1-u)^{r-s-1}\,du}
       {\int_0^1 u^s(1-u)^{r-s-1}\,du}
=\Pr(U\le p).
\]
By the definition of $p_{r,s}$, we have
$\Pr(U\le p_{r,s})=1/2$, so $p_{r,s}$ is the median of $U$.
For $0\le s\le m$, the median--mean inequality for beta
distributions gives $p_{r,s}\le(s+1)/(r+1)$; see
\cite[Corollary~7]{ArabOliveiraWiklund2021}.
We combine this with the lower bound proved below.

\begin{lemma}\label{lem:median}
For $r=2m+1\ge3$ and $0\le s\le m$,
\[
\frac{s+1/2}{r}\le p_{r,s}\le\frac{s+1}{r+1}.
\]
\end{lemma}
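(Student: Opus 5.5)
The upper bound $p_{r,s}\le(s+1)/(r+1)$ is the cited median--mean inequality for $\Beta(s+1,r-s)$, which applies because $s+1\le r-s$ when $s\le m$. So the plan only needs to prove the lower bound $p_{r,s}\ge(s+1/2)/r$. Since $\Pr(\Bin(r,p)\le s)$ is strictly decreasing in $p$ and equals $1/2$ at $p=p_{r,s}$, it suffices to show
\[
\Pr\bigl(\Bin(r,p^*)\le s\bigr)\ge\tfrac12,\qquad p^*:=\frac{s+1/2}{r},
\]
since then $p^*\le p_{r,s}$. Equivalently, the median of $U\sim\Beta(s+1,r-s)$ is at least $(s+1/2)/r$. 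This is the beta-distribution analogue of the classical fact that the median of $\Bin(r,p)$ lies within $\min(p,1-p)$-type bounds of its mean; note $p^*=((s+1)-1/2)/((s+1)+(r-s)-1)$.

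First I would look for this bound in \cite{ArabOliveiraWiklund2021}. That paper, or the earlier work of Payton--Young and Groeneveld--Meeden, gives two-sided bounds of the form $(a-1)/(a+b-2)\le\mathrm{median}\le a/(a+b)$ for the beta distribution with $1\le a\le b$. For $a=s+1$, $b=r-s$ the left endpoint is the mode $s/(r-1)$. This is weaker than $(s+1/2)/r$ for $s<m$, so the mode bound does not suffice. Instead I would aim for the sharper Kerman-type lower bound $(a-1/3)/(a+b-2/3)$. With $a=s+1$, $a+b=r+1$ this becomes $(s+2/3)/(r+1/3)$, and the inequality $(s+2/3)/(r+1/3)\ge(s+1/2)/r$ reduces to $r/6\ge (s+1/2)/3$, i.e.\ $s\le (r-1)/2=m$, which is exactly our range. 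If a citable form of Kerman's bound is available for $1\le a\le b$, this finishes the proof.

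If it is not citable, I would prove the binomial inequality directly as a fallback. Write $F(p)=\Pr(\Bin(r,p)\le s)$ and compare with the symmetric case: at $s=m$, $p^*=1/2$, and $F(1/2)=1/2$ by symmetry, so equality holds there. For $s<m$ I would use a coupling or unimodality argument: the distribution of $\Bin(r,p^*)$ has mean $s+1/2$, and one shows the mass below the mean dominates by comparing $\Pr(X=s-j)$ with $\Pr(X=s+1+j)$ for $j\ge0$. The ratio $\Pr(X=s-j)/\Pr(X=s+1+j)$ should be at least $1$ because $p^*<1/2$ skews the distribution to the right in a way that puts extra mass on the left of the mean, the standard mechanism behind mean--median results for $p\le1/2$.

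I expect this pairwise ratio comparison to be the main obstacle. It can fail for large $j$, where one tail is truncated at $0$ and the other is not, so a cumulative rather than termwise comparison may be needed, for instance via the integral representation $\Pr(U\le p)$ and a change of variables $u\mapsto$ reflection about $p^*$ inside the beta density. I would try first to verify, using log-concavity of $u^s(1-u)^{r-s-1}$, that the beta density on $[p^*-\delta,p^*]$ dominates its mirror on $[p^*,p^*+\delta]$ after suitable normalisation. That would give $\Pr(U\le p^*)\le 1/2$, which is the required median bound.
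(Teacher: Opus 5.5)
\textbf{There is a genuine gap: the lower bound is never proved.} Your reduction is sound. The upper bound is the cited median--mean inequality. The lower bound is equivalent to $\Pr(U\le p^*)\le 1/2$ for $U\sim\Beta(s+1,r-s)$ and $p^*=(s+1/2)/r$. But that lower bound is the only part of the lemma not covered by a citation, and your proposal does not establish it.

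Your main route needs Kerman's formula $(a-1/3)/(a+b-2/3)$ to be a \emph{proven lower bound} for the median of $\Beta(a,b)$ on $1\le a\le b$. As far as I know, Kerman proposed it as a closed-form approximation with a relative-error guarantee, not as a one-sided inequality. Without an actual theorem to cite, this step is unsupported. Your algebra reducing $(s+2/3)/(r+1/3)\ge p^*$ to $s\le m$ is correct, but it only matters if the bound is available. The termwise pairing fallback breaks, as you suspect, because the left tail is truncated at $0$.

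The reflection fallback has a more basic problem. As written it points the wrong way. If the density on $[p^*-\delta,p^*]$ dominates its mirror, that puts \emph{more} mass left of $p^*$. What you need is $g(p^*+t)\ge g(p^*-t)$ for $0<t<p^*$, where $g(u)\propto u^s(1-u)^{r-s-1}$. Then $\int_0^{p^*}g\le\int_{p^*}^{2p^*}g$, and $2p^*\le 1$ would finish.

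However, this inequality is false in the $u$-variable for every $s<m$. The mode of $g$ is $s/(r-1)$, which is strictly less than $p^*$ exactly when $s<m$. So $g$ is strictly decreasing at $p^*$, and $g(p^*+t)<g(p^*-t)$ for small $t$. Even the integrated comparison over $[p^*-\delta,p^*+\delta]$ fails for small $\delta$, and log-concavity cannot change this.

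\textbf{The missing idea} is a change of variables that moves the mode to the point you reflect about. The paper sets $U=\sin^2\Theta$.
\begin{itemize}
\item The density of $\Theta$ is proportional to $h(\theta)=\sin^{2s+1}\theta\cos^{2r-2s-1}\theta$.
\item Its mode $\theta_0$ satisfies $\tan^2\theta_0=(2s+1)/(2r-2s-1)$, i.e.\ $\sin^2\theta_0=p^*$ exactly.
\item $\theta_0\le\pi/4$ precisely because $s\le m$.
\item With $\tau=\tan\theta_0$ and $v=\tan t$, one gets $\log\bigl(h(\theta_0+t)/h(\theta_0-t)\bigr)=(2r-2s-1)\bigl(\tau^2\log\frac{\tau+v}{\tau-v}-\log\frac{1+\tau v}{1-\tau v}\bigr)$.
\item This vanishes at $v=0$, and its $v$-derivative is $2\tau v^2(1-\tau^4)/\bigl((\tau^2-v^2)(1-\tau^2v^2)\bigr)\ge 0$.
\end{itemize}
So the reflection inequality does hold in the $\theta$-variable. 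Then $2\theta_0\le\pi/2$ gives $\Pr(\Theta\ge\theta_0)\ge1/2$, which is $p_{r,s}\ge p^*$.
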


\begin{proof}
It remains to prove the lower bound.
Let $U\sim\Beta(s+1,r-s)$ and let $\Theta=\arcsin\sqrt U$.
The density of $\Theta$ on $(0,\pi/2)$ is proportional to $h(\theta)=\sin^{2s+1}\theta\cos^{2r-2s-1}\theta$.
Choose $\theta_0\in(0,\pi/4]$ such that
$u:=\tan\theta_0=\sqrt{(2s+1)/(2r-2s-1)}$.
Since $\sin^2\theta_0=(s+1/2)/r$, it suffices to show that
$\Pr(\Theta\ge\theta_0)\ge1/2$.

For $0<t<\theta_0$, let $v=\tan t$, so $0<v<u\le1$.
The addition formulas give
\[
\log\frac{h(\theta_0+t)}{h(\theta_0-t)}
=(2r-2s-1)\left(u^2\log\frac{u+v}{u-v}
                  -\log\frac{1+uv}{1-uv}\right)\ge0.
\]
Indeed, the function $u^2\log\frac{u+v}{u-v}
                  -\log\frac{1+uv}{1-uv}$ vanishes at $v=0$, and its
derivative with respect to $v$ is
\[
\frac{2uv^2(1-u^4)}{(u^2-v^2)(1-u^2v^2)}\ge0.
\]
Thus, $h(\theta_0+t)\ge h(\theta_0-t)$ for $0<t<\theta_0$.
Since $2\theta_0\le\pi/2$, we obtain
\[
\int_0^{\theta_0}h(\theta)\,d\theta
\le\int_{\theta_0}^{2\theta_0}h(\theta)\,d\theta
\le\int_{\theta_0}^{\pi/2}h(\theta)\,d\theta.
\]
Hence $\Pr(\Theta\ge\theta_0)\ge1/2$, and therefore
$p_{r,s}\ge\sin^2\theta_0=(s+1/2)/r$.
\end{proof}

\begin{proof}[Proof of \cref{thm:half}]
By \eqref{eq:half-binomial}, it remains to prove $b_{r,s}>\beta_r$
for $s\ne m$. We consider the three cases below.

\medskip\noindent\emph{Case 1: $0\le s\le m-2$.}
Here $r\ge5$. For $0\le s\le m-1$, let
\[
f_r(s):=\binom r{s+1}
 \left(\frac{s+1/2}{r}\right)^{s+1}
 \left(\frac{r-s-1/2}{r}\right)^{r-s-1}.
\]
By \cref{lem:median},
$(s+1/2)/r\le p_{r,s}\le(s+1)/(r+1)<(s+1)/r$.
Since $\binom r{s+1}p^{s+1}(1-p)^{r-s-1}$ is increasing for
$0<p<(s+1)/r$, we have $b_{r,s}\ge f_r(s)$.
Next, we show that $f_r(i-1)>f_r(i)$ for each $i\in [m-2]$ 
and $f_r(m-2)>\beta_r$.

For $0\le s\le m-2$, let $\ell=r-s-1\ge s+4$. Taking logarithms gives
\[
\begin{aligned}
\log\frac{f_r(s+1)}{f_r(s)}
= &(s+1)\log\frac{s+3/2}{s+1/2}
 +\log\left(1-\frac1{2(s+2)}\right)-(\ell-1)\log\frac{\ell+1/2}{\ell-1/2}
 -\log\left(1+\frac1{2\ell}\right)\\
\le{}&\frac1{3(4(s+1)^2-1)}-\frac1{2(s+2)}
       +\frac1{2\ell}+\frac1{8\ell^2}.
\end{aligned}
\]
The inequality follows from the power series for
$\log((1+y)/(1-y))$, together with $\log(1-y)\le-y$ and
$\log(1+y)\ge y-y^2/2$ for $0<y<1$.

For $s\ge1$, we have $2(s+2)(s+3)\le3(4(s+1)^2-1)$.
Moreover, $\frac1{2\ell}+\frac1{8\ell^2}
\le\frac1{2\ell}+\frac1{2\ell(2\ell-1)}
=\frac1{2\ell-1}.$
Combining these estimates and using $\ell\ge s+4$, we obtain
\[
\log\frac{f_r(s+1)}{f_r(s)}
\le
\frac1{2(s+2)(s+3)}-\frac1{2(s+2)}
+\frac1{2\ell-1}\le-\frac1{2(s+3)}+\frac1{2s+7}<0.
\]
For $s=0$, we have $\ell\ge4$, and 
$\log\frac{f_r(1)}{f_r(0)}\le 1/9-1/4+1/8+1/128=-7/1152<0$.

Recalling $r=2m+1$ and $\beta_r=2^{-r}\binom r{(r-1)/2}$, we have
\[
\frac{f_r(m-2)}{\beta_r}
=\frac{r-1}{r+3}
 (1-4/r)^{(r-3)/2}(1+4/r)^{(r+3)/2}.
\]
For $r=5$, this ratio is $6561/6250>1$.
For $r\ge7$, let $x:=1/r$ and
\[
F(x):=\log\frac{1-x}{1+3x}
 +\frac{\log(1-16x^2)}{2x}
 +\frac32\log\frac{1+4x}{1-4x}.
\]
Then $F(1/r)=\log(f_r(m-2)/\beta_r)$ and $F(x)\to0$ as $x\to0$. Moreover, 
\[
\begin{aligned}
F'(x)
&=-\frac4{(1-x)(1+3x)}-\frac4{1-16x^2}
  -\frac{\log(1-16x^2)}{2x^2}\\
&\ge
\frac{4x(2-3x-32x^2-208x^3-512x^4+768x^5)}
 {(1-x)(1+3x)(1-16x^2)}>0,
\end{aligned}
\]
where the first inequality follows from
$-\log(1-y)\ge y+y^2/2$ and the last inequality holds for $0<x\le 1/7$.
Hence $f_r(m-2)>\beta_r$.

\medskip\noindent\emph{Case 2: $s=m-1$.} Recall that $b_{r,m-1}=\binom r{m}p_{r,m-1}^{\,m}(1-p_{r,m-1})^{r-m}$.
We first prove that
\begin{equation}\label{eq:m-1}
p_{r,m-1}\ge\frac12-\frac1{r+1/4}.    
\end{equation}

Let $U\sim\Beta(m,m+2)$ and let $Z=1-2U$.
The density of $Z$ is proportional to
$h(z)=(1-z^2)^{m-1}(1+z)^2$ on $(-1,1)$.
Since
\[
\frac12\int_0^1\bigl(h(z)-h(-z)\bigr)\,dz
=2\int_0^1z(1-z^2)^{m-1}\,dz=\frac1m,
\]
the median of $Z$ is the unique positive $z$ satisfying
$\int_0^z h(t)\,dt=1/m$.
Set $z_0=8/(4r+1)$. To bound this median from above by $z_0$, it
suffices to prove $\int_0^{z_0}h(z)\,dz\ge1/m$.
Bernoulli's inequality gives
$(1-z^2)^{m-1}\ge1-(m-1)z^2$ for $0\le z\le z_0$.
Multiplying by $(1+z)^2$ and integrating, we obtain
\[
\begin{aligned}
\int_0^{z_0}h(z)\,dz
&\ge z_0+z_0^2+\frac{2-m}{3}z_0^3
       -\frac{m-1}{2}z_0^4-\frac{m-1}{5}z_0^5\\
&=\frac1m+
\frac{2(1280r^4-1280r^3+45024r^2+8624r-100523)}
     {15(r-1)(4r+1)^5}>\frac1m.
\end{aligned}
\]
The equality follows by substituting $m=(r-1)/2$ and
$z_0=8/(4r+1)$. The numerator of the additional term is positive,
since $1280r^4-1280r^3>0$ and $45024r^2>100523$ for $r\ge3$.
Thus the median of $Z$ is less than $z_0$.
Since this median equals $1-2p_{r,m-1}$, we obtain~\eqref{eq:m-1}.

For $r\ge5$, by~\eqref{eq:m-1} and $r/4\ge1$, we have $p_{r,m-1}\ge1/2-r/(r^2+1)$. Moreover, $p_{r,m-1}\le m/(r+1)<m/r$ by \cref{lem:median}.
The probability $\binom rm p^m(1-p)^{r-m}$ increases for $0<p<m/r$,
so
\[
\frac{b_{r,m-1}}{\beta_r}
\ge\frac{(r-1)^{r-1}(r+1)^{r+1}}{(r^2+1)^r}>1.
\]
For $r=3$, \eqref{eq:m-1} gives $p_{3,0}\ge5/26$, and the
same monotonicity yields
\[
b_{3,0}=\binom{3}{1}p_{3,0}(1-p_{3,0})^2\ge3\cdot\frac5{26}\left(\frac{21}{26}\right)^2
>\frac38=\beta_3.
\]
Hence, $b_{r,m-1}>\beta_r$ for every odd $r\ge3$.

\medskip\noindent\emph{Case 3: $m+1\le s\le r-2$.}
Let $s'=r-s-1$. Then $1\le s'\le m-1$ and $b_{r,s'}>\beta_r$.
If $U\sim\Beta(s+1,r-s)$, then $1-U\sim\Beta(r-s,s+1)$.
Since $s'+1=r-s$, their medians satisfy
$p_{r,s}=1-p_{r,s'}$. Consequently,
\[
\frac{b_{r,s}}{b_{r,s'}}
=\frac{(s'+1)(1-p_{r,s'})}{(r-s')p_{r,s'}}\ge1,
\]
where the inequality follows from
$p_{r,s'}\le(s'+1)/(r+1)$ in \cref{lem:median}. Thus, $b_{r,s}\ge b_{r,s'}>\beta_r$.

Together with $b_{r,m}=\beta_r$, these inequalities prove the theorem
by \eqref{eq:half-binomial}.
\end{proof}

\section{Proof of large tight components}
\label{sec:components}

We use the neighborhood estimates from the preceding sections
to prove \cref{thm:components,cor:components}. The induction also
needs the following normalized form of the Kruskal--Katona
shadow bound~\cite{Kruskal1963}.

\begin{lemma}\label{lem:KK}
Let $n\ge k\ge2$. Every $n$-vertex $k$-graph $H$ satisfies
\[
\mu(H)\le\mu(\bd H)^{k/(k-1)}.
\]
\end{lemma}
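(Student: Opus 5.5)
We derive this from the Lovász form of the Kruskal--Katona theorem~\cite{Kruskal1963}. If $H$ has no edges, there is nothing to prove, so assume $e(H)>0$. Choose the unique real $y\ge k$ with $e(H)=\binom yk$; here $\binom yk:=y(y-1)\cdots(y-k+1)/k!$. The Lovász version then gives $|\bd H|\ge\binom y{k-1}$. Since $\bd H\subseteq\binom{[n]}{k-1}$, we also have $\binom y{k-1}\le\binom n{k-1}$, and hence $y\le n$.

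Next we normalize. Set
\[
a:=\frac{\binom yk}{\binom nk},\qquad b:=\frac{\binom y{k-1}}{\binom n{k-1}}.
\]
It suffices to prove $a\le b^{k/(k-1)}$: this gives $\mu(H)=a\le b^{k/(k-1)}\le\mu(\bd H)^{k/(k-1)}$.

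We compare the two ratios factor by factor. Write
\[
a=\prod_{i=0}^{k-1}\frac{y-i}{n-i},\qquad b=\prod_{i=0}^{k-2}\frac{y-i}{n-i}.
\]
Put $c_i:=(y-i)/(n-i)\in[0,1]$. Because $y\le n$, each $c_i$ is nonincreasing in $i$: indeed, $(y-i)/(n-i)$ decreases as $i$ grows whenever $y\le n$. Therefore
\[
a=b\,c_{k-1}\qquad\text{and}\qquad c_{k-1}\le\Bigl(\prod_{i=0}^{k-2}c_i\Bigr)^{1/(k-1)}=b^{1/(k-1)},
\]
since the smallest factor is at most the geometric mean of the others. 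Combining the two relations gives $a\le b^{k/(k-1)}$, as required.

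The only delicate point is justifying $y\le n$, and the shadow-containment argument above handles it. If $y$ were below $k-1$, the monotonicity argument would break, but $y\ge k$ always holds when $e(H)\ge1$. If one prefers to avoid real-valued binomials, an alternative is to apply the integer cascade form of Kruskal--Katona, but the Lovász form is cleaner and is what I would use.
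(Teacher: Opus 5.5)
Your proof is correct and takes the same route as the paper. The paper also uses the Lovász form of Kruskal--Katona with a real $y\in[k,n]$, then uses the fact that $(y-i)/(n-i)$ is nonincreasing in $i$ to get $\bigl(\binom yk/\binom nk\bigr)^{1/k}\le\bigl(\binom y{k-1}/\binom n{k-1}\bigr)^{1/(k-1)}$. The bound $y\le n$ already follows from $e(H)\le\binom nk$, so your detour through the shadow is unnecessary, though harmless.
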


\begin{proof}
There is nothing to prove if $e(H)=0$.
Otherwise, choose a real $k\le x \le n$ such that $e(H)=\binom xk$.
By the Kruskal--Katona theorem,
$|\bd H|\ge\binom{x}{k-1}$.
Since $x\le n$, the sequence $(x-i)/(n-i)$ is nonincreasing in $i$.
Consequently,
\[
\left(\frac{\binom xk}{\binom nk}\right)^{1/k}
\le
\left(\frac{\binom x{k-1}}{\binom n{k-1}}\right)^{1/(k-1)}.
\]
It follows that
\[
\mu(H)
\le
\left(\frac{\binom x{k-1}}{\binom n{k-1}}\right)^{k/(k-1)}
\le
\mu(\bd H)^{k/(k-1)}.
\]
\end{proof}

\begin{proof}[Proof of~\cref{thm:components}]
We proceed by induction on $k$.
For $k=2$, we have $\widehat q_2=5/8$.
Let $H$ be a graph on $n\ge4$ vertices with
$e(H)\ge(5/8)\binom n2$.
Let $X$ be the vertex set of a connected component with most
vertices, and set $m=|X|$.
Every vertex has degree at most $m-1$, so
$2e(H)\le n(m-1)$ and hence $m>n/2$.
If $m\le3n/4$, then 
\[
e(H)\le\binom m2+\binom{n-m}{2}
\le\frac{5n^2}{16}-\frac n2
<\frac58\binom n2,
\]
a contradiction. Thus $m>3n/4$.
Since all edges outside $H[X]$ lie in $V(H)\setminus X$, we obtain
\[
e(H[X])\ge e(H)-\binom{n-m}{2}
>\frac58\binom n2-\frac{n^2}{32}
\ge\frac12\binom n2.
\]
This proves the base case.

Now let $k\ge3$ and assume that the statement holds for $k-1$.
By decreasing $\eps$ if necessary, we may assume that
$0<\eps<(1-\widehat q_k)/2$ and
\[
\left(\frac12+2\eps\right)^{k/(k-1)}
<\frac12-\eps.
\]
Such a choice is possible since $2^{-k/(k-1)}<1/2$.
Choose $n_0\ge k/\eps$ sufficiently large that the induction
hypothesis applies to $(k-1)$-graphs on $n-1$ vertices with error
$\eps$, and that \cref{lem:Johnson} applies with error
$\eps^2/(4k(k-1))$, whenever $n\ge n_0$.

Let $H$ be a $k$-graph on $n\ge n_0$ vertices with
$\mu(H)\ge\widehat q_k$.
Suppose, for a contradiction, that every tight component of $H$
has density less than $1/2-\eps$.
Averaging over vertex links gives a vertex $v$ such that
\[
\mu(L_H(v))\ge\mu(H)\ge\widehat q_k\ge\widehat q_{k-1},
\]
where the last inequality follows from \eqref{eq:beta-def}.
By induction, $L_H(v)$ has a tight component $H_v$ with at least
$(1/2-\eps)\binom{n-1}{k-1}$ edges.
Since $H_v$ is tightly connected, all edges $e\cup\{v\}$ with
$e\in E(H_v)$ lie in a single tight component $H_0$ of $H$.
Thus $E(H_v)\subseteq\bd H_0$, and
\[
\mu(\bd H_0)
\ge\left(\frac12-\eps\right)
   \frac{\binom{n-1}{k-1}}{\binom n{k-1}}
=\left(\frac12-\eps\right)\frac{n-k+1}{n}
\ge\frac12-2\eps.
\]

Let $H'$ be the spanning subgraph of $H$ with edge set
$E(H)\setminus E(H_0)$.
Distinct tight components have disjoint shadows, so
$\mu(\bd H')\le1/2+2\eps$.
By \cref{lem:KK} and our choice of $\eps$,
\[
\mu(H')\le\mu(\bd H')^{k/(k-1)}
\le\left(\frac12+2\eps\right)^{k/(k-1)}
<\frac12-\eps.
\]
Consequently,
\[
\widehat q_k-\frac12+\eps
<\mu(H_0)=\mu(H)-\mu(H')
<\frac12-\eps,
\]
where the upper bound follows from our assumption on the components.

By \cref{thm:half,lem:cylinder}, we have
$\Phi_k(1/2)\ge1-\widehat q_k$.
Applying \cref{lem:reflection} with $\beta=1-\widehat q_k$
therefore gives
\[
\Phi_k(\mu(H_0))
>\beta +\frac{\eps^2}{2k(k-1)}.
\]
Every member of $N_{\mathrm J}(E(H_0))$ is a missing edge of $H$,
since an edge of $H$ adjacent to an edge of $H_0$ belongs to the
same tight component. Thus \cref{lem:Johnson} gives
\[
\beta
\ge1-\mu(H)
\ge\frac{|N_{\mathrm J}(E(H_0))|}{\binom nk}\ge\Phi_k(\mu(H_0))-\frac{\eps^2}{4k(k-1)}
>\beta,
\]
a contradiction. This completes the induction.
\end{proof}

\begin{proof}[Proof of \cref{cor:components}]
Fix an odd $r\ge3$ and $0<\eps<\beta_r$.
Choose $n_0$ sufficiently large that \cref{thm:components}
applies in uniformity $r$ with error $\eps/8$, and that
$\binom nr\ge8/\eps$ whenever $n\ge n_0$.
Let $H$ be an $r$-graph on $n\ge n_0$ vertices with
$\mu(H)\ge q_r+\eps$.
Suppose, for a contradiction, that every tight component of $H$
has fewer than $(1/2+\eps/4)\binom nr$ edges.

By \cref{thm:components}, $H$ has at least one tight
component with at least $(1/2-\eps/8)\binom nr$ edges.
Since distinct components have disjoint edge sets and
$1/2-\eps/8>1/3$, there are at most two such components.
From each such component, delete edges one at a time until fewer
than $(1/2-\eps/8)\binom nr$ remain, and let $H'$ be the resulting
spanning subgraph.
By our assumption, at most $(3\eps/8)\binom nr+1$ edges are
deleted from each component. Thus the total number of deleted
edges is at most
\[
2\left(\frac{3\eps}{8}\binom nr+1\right)
\le\eps\binom nr,
\]
and hence $\mu(H')\ge q_r$.

Deleting edges cannot merge tight components, so every tight
component of $H'$ has fewer than $(1/2-\eps/8)\binom nr$ edges.
This contradicts \cref{thm:components}, applied to $H'$
with error $\eps/8$.
\end{proof}

\section{Proof of \texorpdfstring{\cref{thm:main_all_odd}}{the Hamilton threshold theorem}}
\label{sec:hamilton}

In this section, we prove \cref{thm:main_all_odd} using the Hamilton
framework of Lang and
Sanhueza-Matamala~\cite{LangSanhuezaBandwidth}.
We first recall the framework and explain how to construct one
from tight components in the links.

\subsection{Hamilton frameworks}
We begin with the Hamilton-framework formalism of Lang and
Sanhueza-Matamala~\cite{LangSanhuezaBandwidth}, in the specialized form used
throughout the paper.
A \emph{fractional matching} in a $k$-graph $H$ is a function
$\omega:E(H)\to[0,1]$ such that
$\sum_{e\ni v}\omega(e)\le1$ for every $v\in V(H)$.
Its size is $\sum_{e\in E(H)}\omega(e)$, and it is \emph{perfect}
if its size is $|V(H)|/k$.
Write $\nu(H)$ and $\nu^*(H)$ for the maximum sizes of a matching
and a fractional matching in $H$, respectively.
We use the following definition for tight cycles.

\begin{definition}[{\cite{LangSanhuezaBandwidth}}]
\label{def:hamilton-framework}
Let $\cP_n$ be a family of $n$-vertex $k$-graphs.
We say that $\cP_n$ \emph{admits a Hamilton framework} if there exists a spanning subgraph $H^\star\subseteq H$ for each
$H\in\cP_n$ such that the following conditions hold.
\begin{enumerate}[label=\textup{(F\arabic*)},leftmargin=3em]
\item\label{item:F1} $H^\star$ is a tight component of $H$.
\item\label{item:F2} $H^\star$ has a perfect fractional matching.
\item\label{item:F3} $H^\star$ contains a closed tight walk of length $1$ modulo $k$.
\item\label{item:F4}
If $H_1=H^+-x$ and $H_2=H^+-y$ belong to $\cP_n$, where
$H^+$ is an $(n+1)$-vertex $k$-graph and $x\ne y$, then
$H_1^\star\cup H_2^\star$ is tightly connected.
\end{enumerate}
\end{definition}

For integers $1\le d<k$, the \emph{Hamilton framework threshold}
$\delta_d^{\HF}(k)$ is the infimum of all $\delta\in[0,1]$ such
that, for every $\eps>0$ and all sufficiently large $n$, the family
of all $n$-vertex $k$-graphs $H$ with
$\delta_d(H)\ge(\delta+\eps)\binom{n-d}{k-d}$ admits a Hamilton
framework.
The bandwidth theorem of Lang and
Sanhueza-Matamala~\cite[Theorem~1.1]{LangSanhuezaBandwidth},
with all clusters of size one, gives the following
consequence.

\begin{lemma}[{\cite{LangSanhuezaBandwidth}}]\label{thm:HC_HF}
For every $1\le d<k$, we have
$\delta_d^{\HC}(k)\le\delta_d^{\HF}(k)$.
\end{lemma}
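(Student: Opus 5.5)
The plan is to obtain \cref{thm:HC_HF} directly from the bandwidth theorem of Lang and Sanhueza-Matamala~\cite[Theorem~1.1]{LangSanhuezaBandwidth}, with no new combinatorics. Fix $1\le d<k$ and $\delta>\delta_d^{\HF}(k)$; it suffices to show $\delta_d^{\HC}(k)\le\delta$. Let $\eps>0$ and put $\delta':=\delta+\eps/2$. Since $\delta'$ exceeds $\delta_d^{\HF}(k)$ with slack $\eps/2$, the definition gives $n_1$ such that, for every $n\ge n_1$, the family
\[
\cP_n:=\Bigl\{H:\ |V(H)|=n,\ \delta_d(H)\ge\bigl(\delta+\tfrac{\eps}{2}\bigr)\tbinom{n-d}{k-d}\Bigr\}
\]
admits a Hamilton framework in the sense of \cref{def:hamilton-framework}.

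Next I would check the robustness hypotheses on the property that the bandwidth theorem needs. Their theorem is phrased for a property (a sequence of families) that admits Hamilton frameworks and is stable under the operations used in its proof. These operations are passing to a random or regularity-type reduced structure, and deleting or adding a vertex (this is what \ref{item:F4} refers to). A minimum $d$-degree condition with slack is preserved under all of these. Deleting a vertex, or passing from $H$ to a reduced or sampled graph, changes the normalized $d$-degree by $o(1)$, so a graph satisfying the bound with $\delta+\eps$ produces auxiliary graphs that still satisfy it with $\delta+\eps/2$, hence lie in $\cP$. This is the step I expect to need the most care: the precise list of closure conditions in \cite{LangSanhuezaBandwidth} must be matched against the degree property. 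I would follow their own treatment of minimum-degree families, where the degree property is shown to be inherited by reduced graphs up to an $o(1)$ loss.

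With these hypotheses verified, the bandwidth theorem yields the following. For all sufficiently large $n$, every $n$-vertex $k$-graph with $\delta_d(H)\ge(\delta+\eps)\binom{n-d}{k-d}$ contains every spanning $k$-graph of bounded maximum degree and sublinear bandwidth that is a blow-up of a tight cycle. We use the version with clusters of size one, in which the allowed spanning structures include the tight Hamilton cycle itself. A tight Hamilton cycle has maximum degree $k$ and bandwidth at most $2k$ in the cyclic vertex order, so it qualifies. Hence such an $H$ contains a tight Hamilton cycle for all large $n$. Since $\eps>0$ was arbitrary, $\delta_d^{\HC}(k)\le\delta$, and letting $\delta\downarrow\delta_d^{\HF}(k)$ gives the lemma.
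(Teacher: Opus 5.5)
Your proposal is correct and takes the same route as the paper, which obtains the lemma simply by specializing the bandwidth theorem \cite[Theorem~1.1]{LangSanhuezaBandwidth} to a tight cycle with all clusters of size one; your quantifier bookkeeping ($\delta+\eps/2$ versus $\delta+\eps$) and your robustness check are extra detail that the paper leaves to the cited result. One small slip: the order $v_1,\dots,v_n$ does not witness bounded bandwidth, because the wrap-around edges contain both $v_1$ and $v_n$, whereas the zigzag order $v_1,v_n,v_2,v_{n-1},\dots$ does give bandwidth at most $2k$.
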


For odd $r\ge3$ and $k\ge s_r$, let $d=k-r$.
By \cref{thm:HC_HF}, to prove
\cref{thm:main_all_odd} it suffices to show that
$\delta_d^{\HF}(k)\le q_r$.
Thus, for each fixed $\eps>0$ and all sufficiently large $n$, we
must construct a Hamilton framework for the family of $n$-vertex
$k$-graphs with minimum $d$-degree at least
$(q_r+\eps)\binom{n-d}r$.
We will select the large tight component in each $d$-link using
\cref{cor:components}. We first state the results used to verify the framework conditions.

\subsection{Tools for constructing the framework}

For $1\le d\le k-2$, a \emph{$d$-vicinity} in a $k$-graph $H$ is
a collection $\V=\{H_S\}_{S\in\binom{V(H)}d}$ with
$H_S\subseteq L_H(S)$.
It generates the spanning $k$-graph $H^\circ$ with
\[
E(H^\circ)=
\{S\cup e:S\in\tbinom{V(H)}d,\ e\in E(H_S)\}.
\]

The following two configurations are used to obtain tight
connectivity and a closed tight walk of the required length.
A \emph{switcher} in an $r$-graph $H$ is an edge $e$ with a vertex
$a\in e$ such that, for every $b\in e$, there is a vertex
$z\in V(H)$ with $(e\setminus\{a\})\cup\{z\}, (e\setminus\{b\})\cup\{z\}\in E(H)$.
An \emph{arc} of a $d$-vicinity $\V$ is an ordered $(k+1)$-tuple
$(v_1,\ldots,v_{k+1})$ of distinct vertices such that
\[
\{v_{d+1},\ldots,v_k\}\in E(H_{\{v_1,\ldots,v_d\}})~~\text{and}
~~\{v_{d+2},\ldots,v_{k+1}\}\in E(H_{\{v_2,\ldots,v_{d+1}\}}).
\]
We use the following results of Lang and Sanhueza-Matamala~\cite[Lemmas~4.19, 4.21 and~4.24]{LangSanhuezaBandwidth}.

\begin{lemma}[{\cite{LangSanhuezaBandwidth}}]
\label{lem:vicinity-toolkit}
Let $1\le d\le k-2$ and $r=k-d$.
\begin{enumerate}[label=\textup{(\roman*)},leftmargin=2.7em]
\item\label{item:connectivity-tool}
Suppose that a $d$-vicinity $\V$ in a $k$-graph has tightly
connected members, every two members share an $r$-edge, every
member contains a switcher, and $\V$ has an arc.
Then the generated $k$-graph is tightly connected and contains a
closed tight walk of length $1$ modulo $k$.
\item\label{item:arc-tool}
Let $\delta_0\in(0,1)$ satisfy
$\delta_0+\delta_0^{1-1/r}>1$.
For all sufficiently large $n$, every $d$-vicinity in an
$n$-vertex $k$-graph whose members each have at least
$\delta_0\binom{n-d}r$ edges has an arc.
\item\label{item:switcher-tool}
For every $\zeta>0$ and all sufficiently large $n$, the following
holds. Let $H$ be an $r$-graph on $n$ vertices and let
$H_0\subseteq H$ be a nonempty spanning subgraph. If
\[
\mu(H)\ge\frac r{r+2\sqrt{r-1}}+\zeta
\quad\text{and}\quad
\frac{\mu(H_0)}{\mu(\bd H_0)}
\ge\frac r{r+2\sqrt{r-1}}+\zeta,
\]
then $H_0$ contains a switcher.
\end{enumerate}
\end{lemma}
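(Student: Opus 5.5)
This lemma is imported from Lang and Sanhueza-Matamala, so the plan is mainly to check that \cite[Lemmas~4.19, 4.21 and~4.24]{LangSanhuezaBandwidth} specialize to the three items as stated. First I would align the definitions: $d$-vicinity, generated $k$-graph $H^\circ$, switcher and arc in \cite{LangSanhuezaBandwidth} must match the ones given just above the lemma, with tight cycles as the target structure and all clusters of size one. I would also sketch why each item holds, as a sanity check on the hypotheses.

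For \ref{item:connectivity-tool}, I would argue in three steps.
\begin{enumerate}[label=(\alph*),leftmargin=2.7em]
\item Inside a single $d$-set $S$, tight connectivity of $H_S$ makes all edges $S\cup e$ with $e\in E(H_S)$ tightly connected in $H^\circ$. The reason is that tight adjacency in $H_S$ lifts to tight adjacency in $H^\circ$.
\item For two $d$-sets $S,S'$, consider a common $r$-edge $f\in H_S\cap H_{S'}$. Together with an arc and switchers, this lets one build walks that move from $S\cup f$ to $S'\cup f$.
\begin{itemize}
\item The arc provides one transition in which the $d$-set rotates by one vertex.
\item The switcher in each member lets us reorder the vertices of the current $r$-edge. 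This lets us reach any prescribed $d$-subset by repeated rotations.
\end{itemize}
\item For the closed walk of length $1 \bmod k$, I would traverse the arc once and close the walk up inside one link, using tight connectivity and the common edges. The switcher absorbs the resulting shift of the ordering, which produces the residue $1$.
\end{enumerate}

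For \ref{item:arc-tool}, I would use a counting argument. Fix a $d$-set $S=\{v_1,\ldots,v_d\}$ and an edge $e\in H_S$, choose $v_{d+1}\in e$, and put $S'=(S\setminus\{v_1\})\cup\{v_{d+1}\}$. We need $e\setminus\{v_{d+1}\}$ to lie in $\bd H_{S'}$. By \cref{lem:KK}, $\mu(\bd H_{S'})\ge\delta_0^{(r-1)/r}$. Averaging over the choices, the $(r-1)$-sets $e\setminus\{v_{d+1}\}$ arising from edges of $H_S$ cover a proportion of roughly $\delta_0$ of all $(r-1)$-sets, with $(r-1)$-sets weighted by their degree. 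Since $\delta_0+\delta_0^{1-1/r}>1$, the two families of $(r-1)$-sets must intersect for some choice of $S$ and $v_{d+1}$, up to $o(1)$ errors as $n\to\infty$. This intersection yields the arc.

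For \ref{item:switcher-tool}, I would use an averaging argument over edges of $H_0$. A candidate $(e,a)$ fails only if some $b\in e$ has no common extension $z$ of $e\setminus\{a\}$ and $e\setminus\{b\}$. The edge-to-shadow ratio bound forces typical $(r-1)$-sets of $\bd H_0$ to have high degree in $H_0$, and the density of $H$ supplies many $z$. The constant $r/(r+2\sqrt{r-1})$ should come out of optimizing a Cauchy--Schwarz step. I expect this optimization to be the main obstacle, so here I would simply defer to \cite[Lemma~4.24]{LangSanhuezaBandwidth} rather than reprove the sharp constant. In the proof itself I would quote the three lemmas verbatim, noting only that the parameters translate with $r=k-d$.
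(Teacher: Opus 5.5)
Your plan matches the paper, which gives no proof of its own and simply imports the three items from \cite[Lemmas~4.19, 4.21 and~4.24]{LangSanhuezaBandwidth}; quoting them verbatim with $r=k-d$ is exactly what you propose in the end. Two small corrections to your sanity-check sketches: tight connectivity of the generated $k$-graph already follows from tightly connected members with pairwise shared edges (if $|S\triangle S'|=2$ and $f\in E(H_S)\cap E(H_{S'})$, then $S\cup f$ and $S'\cup f$ share $k-1$ vertices, and one chains along $J(n,d)$), so the arc and switchers are needed only for the closed walk of length $1$ modulo $k$; and in item~(ii) you should first fix $v_{d+1}$ by averaging so that its link in $H_S$ has density roughly $\delta_0$, and only then intersect that link with $\bd H_{S'}$, choosing the extension vertex $v_{k+1}\ne v_1$.
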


For the fractional matching condition, we use the following
lemma. It follows from the proof of
\cite[Proposition~1.1]{AlonEtAl2012} and is stated in this form in
\cite[Lemma~4.20]{LangSanhuezaBandwidth}.

\begin{lemma}[\cite{AlonEtAl2012,LangSanhuezaBandwidth}]\label{lem:matching-transfer}
Let $1\le d\le k-2$ and $n\ge k$, and let $H$ be an $n$-vertex
$k$-graph. If $\nu^*(L_H(S))\ge n/k$ for every
$S\in\binom{V(H)}d$, then $H$ has a perfect fractional matching.
\end{lemma}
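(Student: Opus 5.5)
The plan is to argue by LP duality, as in the proof of \cite[Proposition~1.1]{AlonEtAl2012}. By duality, $\nu^*(H)$ equals the minimum size $\tau^*(H)$ of a \emph{fractional cover}: a function $w:V(H)\to[0,1]$ with $\sum_{v\in e}w(v)\ge1$ for every $e\in E(H)$. Since $\nu^*(H)\le n/k$ always holds, it suffices to show that every fractional cover $w$ of $H$ has total weight $W:=\sum_v w(v)\ge n/k$. Suppose instead that $W<n/k$, and fix such a $w$; I will derive a contradiction by restricting $w$ to links.

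\emph{Transfer to links.} Fix $S\in\binom{V(H)}d$ with $w(S):=\sum_{v\in S}w(v)<1$. For every $e\in E(L_H(S))$ we have $S\cup e\in E(H)$, so $\sum_{v\in e}w(v)\ge1-w(S)$. Hence $w/(1-w(S))$, restricted to $V(H)\setminus S$, is a fractional cover of $L_H(S)$. The hypothesis and duality in the link then give
\[
W-w(S)\ \ge\ (1-w(S))\,\tau^*(L_H(S))\ \ge\ (1-w(S))\,\frac nk .
\]
Rearranging, $W\ge \frac nk-w(S)\bigl(\frac nk-1\bigr)$.

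\emph{Choice of $S$.} For this to contradict $W<n/k$ we need a $d$-set $S$ whose weight is essentially $0$. The naive choice does not give this. Taking the $d$ lightest vertices only guarantees $w(S)\le dW/n$, which yields the weaker bound $W\ge n/(k+d-dk/n)$. This is the main obstacle. Two remedies need to be checked, and I would try them in the order listed.
\begin{enumerate}[label=\textup{(\alph*)},leftmargin=2.7em]
\item First, exploit the slack in the hypothesis: $L_H(S)$ has $n-d$ vertices and is $r$-uniform, so a perfect fractional matching there would have size $(n-d)/r>n/k$. The idea is to strengthen the link inequality by also counting how much of the link's optimal fractional matching covers the high-weight vertices.
\item Failing that, follow Alon et al.\ directly on the primal side. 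For each $d$-set $S$, take a fractional matching $\omega_S$ of $L_H(S)$ of size exactly $n/k$. Then define $\omega(e):=c\sum_{S\subseteq e,\,|S|=d}\omega_S(e\setminus S)$, with a normalizing constant $c$ chosen so that every vertex load is at most $1$.
\end{enumerate}

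In approach (b), the load at a vertex $v$ splits into two contributions. Those from sets $S\not\ni v$ are each at most $1$. Those from sets $S\ni v$ each equal the full size $n/k$ of $\omega_S$. One then checks that the resulting total weight is $\binom nd\,c\,n/k$. The remaining step, and the one requiring care, is to verify that the symmetric averaging makes the two load contributions balance exactly, so that this total weight comes out as $n/k$.
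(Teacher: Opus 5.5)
Your LP-duality set-up and the link inequality $W\ge\frac nk-w(S)\bigl(\frac nk-1\bigr)$ are correct, and you correctly diagnose why taking the $d$ lightest vertices is not enough. But the proposal stops there, with two remedies left unexecuted, so the step that actually proves the lemma is missing. (The paper gives no proof of its own; it only cites Alon et al.\ and Lang--Sanhueza-Matamala.)

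Remedy (b) cannot be completed. Uniform averaging is the primal mirror of your naive dual estimate and loses exactly the same amount. The loads balance only on average: the total load is $c\,n\binom nd$, so the average load is $c\binom nd$. The only per-vertex bound you have is $c\bigl(\binom{n-1}{d-1}\frac nk+\binom{n-1}d\bigr)=c\binom nd\bigl(1+d(\frac1k-\frac1n)\bigr)$, and this again caps the size at $n/(k+d-dk/n)$. Individual loads genuinely differ. Suppose $k\mid n$ and $H$ consists of all $k$-sets meeting a fixed set $A$ with $|A|=n/k$. Then $H$ satisfies the hypothesis, and every fractional matching of size $n/k$ in $L_H(S)$ with $S\cap A=\varnothing$ saturates $A$. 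Hence for $d\ge2$ every vertex of $A$ receives load at least $c\binom nd\bigl(\frac dk+(1-\frac1k)^d-o(1)\bigr)>c\binom nd$. So the averaged function has size strictly below $n/k$, whatever the choice of the $\omega_S$.

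What is missing is a sharper upper bound on $\sum_{v\notin S}\omega(v)w(v)$; remedy (a) gestures at this. Let $S$ be the $d$ lightest vertices and set $a:=\max_{u\in S}w(u)$, so that $w(v)\ge a\ge w(S)/d$ for all $v\notin S$. Take a fractional matching $\omega$ of $L_H(S)$ of size exactly $n/k$. Its vertex loads satisfy $\omega(v)\le1$ and sum to $(k-d)\frac nk$. This leaves $(n-d)-(k-d)\frac nk=d\bigl(\frac nk-1\bigr)$ units of unused capacity, each on a vertex of weight at least $a$. Therefore
\[
\frac nk\bigl(1-w(S)\bigr)\le\sum_{f\in E(L_H(S))}\omega(f)\,w(f)=\sum_{v\notin S}\omega(v)\,w(v)\le W-w(S)-a\,d\Bigl(\frac nk-1\Bigr),
\]
that is, $W\ge\frac nk+\bigl(\frac nk-1\bigr)\bigl(ad-w(S)\bigr)\ge\frac nk$.

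Alternatively, on the dual side you can get a $d$-set with $w(S)=0$ exactly, as you wanted. Let $w$ be a fractional cover of a $j$-graph on $N$ vertices with total below $n/k\le N/j$, and let $m=\min w$. Then the cover $(w-m)/(1-jm)$ still has total below $n/k$ and has a vertex of weight $0$. Since $n/k\le(n-i)/(k-i)$ whenever $n\ge k$, you can iterate this $d$ times through successive links.
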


We also use Frankl's shadow--matching inequality~\cite[Theorem~1.2]{Frankl2013}.

\begin{theorem}[\cite{Frankl2013}]\label{thm:Frankl}
For every $r\ge2$, every $r$-graph $H$ satisfies
$e(H)\le\nu(H)|\bd H|$.
\end{theorem}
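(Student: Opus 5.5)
The plan is the standard \emph{shifting} argument, followed by a double induction on the number of vertices $n$ and on the uniformity $r$. Set $s:=\nu(H)$. We may assume $s\ge1$, since otherwise $e(H)=0$.

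\textbf{Step 1: reduce to shifted families.} For $1\le i<j\le n$ apply the $(i,j)$-shift $S_{ij}$ to $E(H)$. It is classical that $S_{ij}$ preserves the number of edges, does not increase the matching number, and satisfies $\bd S_{ij}(\F)\subseteq S_{ij}(\bd\F)$, so $|\bd S_{ij}(\F)|\le|\bd\F|$. Iterating until the family is stable, we obtain a shifted $r$-graph with the same number of edges, matching number at most $s$, and no larger shadow. It therefore suffices to prove $e(H)\le s\,|\bd H|$ for shifted $H$ with $\nu(H)\le s$.

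\textbf{Step 2: the case of few vertices.} If $n<r(s+1)$, I would not use the matching condition at all. Instead I would use the local LYM (normalized matching) inequality for the bipartite inclusion graph between $\binom{[n]}r$ and $\binom{[n]}{r-1}$: every $r$-set contains $r$ of the $(r-1)$-sets, and every $(r-1)$-set lies in $n-r+1$ of the $r$-sets. Double counting gives
\[
e(H)\le\frac{n-r+1}{r}\,|\bd H|\le s\,|\bd H|,
\]
since $n-r+1\le rs$ when $n\le r(s+1)-1$.

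\textbf{Step 3: the induction step for $n\ge r(s+1)$.} Split $E(H)$ into $\F_0$, the edges avoiding $n$, and $\F_1$, the edges containing $n$. Let $L$ be the $(r-1)$-graph on $[n-1]$ with edge set $\{e\setminus\{n\}:e\in\F_1\}$. The families $\bd\F_0$ (sets avoiding $n$) and $\{g\cup\{n\}:g\in\bd L\}$ (sets containing $n$) are disjoint subfamilies of $\bd H$, so
\[
|\bd H|\ge|\bd\F_0|+|\bd L|.
\]
Clearly $\nu(\F_0)\le s$, so induction on $n$ gives $|\F_0|\le s|\bd\F_0|$. For $L$, the key claim is $\nu(L)\le s$. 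Suppose $L$ contained $s+1$ disjoint $(r-1)$-sets. They cover only $(r-1)(s+1)$ vertices of $[n-1]$. Since $n-1\ge r(s+1)-1$, there are at least $s+1$ further vertices available, say $y_1,\dots,y_{s+1}$ (one can also allow $y_{s+1}=n$ if needed). Because $H$ is shifted and each $g_i\cup\{n\}\in E(H)$, replacing $n$ by $y_i$ keeps us inside $E(H)$. This produces $s+1$ pairwise disjoint edges of $H$, a contradiction. Induction on the uniformity then gives $|L|\le s|\bd L|$. The base case is $r-1=1$, where $\bd L=\{\varnothing\}$ and $|L|=\nu(L)\le s$. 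Summing,
\[
e(H)=|\F_0|+|L|\le s\bigl(|\bd\F_0|+|\bd L|\bigr)\le s\,|\bd H|.
\]

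\textbf{Main obstacle.} I expect the delicate point to be the shifting bookkeeping in Step 1, namely checking carefully that $\bd S_{ij}(\F)\subseteq S_{ij}(\bd\F)$ and that shifting cannot increase $\nu$. The other point to get right is the choice of threshold $n\ge r(s+1)$, so that the two regimes (LYM for small $n$, matching transfer to the link $L$ for large $n$) meet without a gap. If the count of free vertices in Step 3 turns out to be off by one, I would adjust the threshold in Step 2 accordingly. The LYM bound tolerates this, since $(n-r+1)/r\le s$ for all $n\le r(s+1)-1$.
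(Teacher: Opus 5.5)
The paper does not prove this statement. It is quoted from Frankl and used only as a black box when verifying (F2), so there is no internal proof to compare with. Judged on its own, your argument is correct and complete. It is the standard shifting proof of this inequality, and to my knowledge essentially Frankl's original argument. It has three ingredients:
\begin{itemize}
\item a reduction to shifted families;
\item the double count $r\,e(H)\le(n-r+1)|\partial H|$ when $n<r(s+1)$;
\item for $n\ge r(s+1)$, a split at the last vertex, where shiftedness forces $\nu(L)\le s$.
\end{itemize}
Compared with the citation, what it buys is self-containedness. The paper needs nothing beyond the inequality itself.

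\textbf{Step 3 count.} The phrase ``at least $s+1$ further vertices'' is false inside $[n-1]$, where only $n-1-(r-1)(s+1)\ge s$ free vertices are guaranteed. It is your parenthetical that makes the step right. Counting in $[n]$ gives $n-(r-1)(s+1)\ge s+1$ vertices outside $g_1\cup\dots\cup g_{s+1}$. Taking $y_i=n$ is allowed because $g_i\cup\{n\}$ is already an edge and needs no shift. So no adjustment of the threshold is needed.

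\textbf{The regimes meet exactly.} The condition $n\ge r(s+1)$ is precisely where $(n-r+1)/r\le s$ fails. The family $\binom{[(s+1)r-1]}{r}$ has $\nu=s$ and $e=s\,|\partial|$. It therefore shows both that Step~2 is sharp and that the inequality is tight.

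\textbf{Induction order.} State the induction explicitly as lexicographic on $(r,n)$, since $\mathcal F_0$ and $L$ both live on $[n-1]$. This is harmless because $e$, $\nu$ and the shadow size do not depend on the ground set. Both $\mathcal F_0$ and $L$ also inherit shiftedness, or can simply be re-shifted.

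\textbf{Shifting facts.} The two facts you flag are classical and true. The inclusion $\partial S_{ij}(\mathcal F)\subseteq S_{ij}(\partial\mathcal F)$ is standard.

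For $\nu$, a short argument suffices:
\begin{itemize}
\item Every member of $S_{ij}(\mathcal F)\setminus\mathcal F$ contains $i$. So among $s+1$ pairwise disjoint members of $S_{ij}(\mathcal F)$, at most one, say $G_1$, lies outside $\mathcal F$.
\item Replace $G_1$ by $(G_1\setminus\{i\})\cup\{j\}\in\mathcal F$.
\item If some $G_2$ contains $j$, replace it by $(G_2\setminus\{j\})\cup\{i\}$. This set lies in $\mathcal F$ because $G_2\in\mathcal F$ was left fixed by the shift.
\end{itemize}
The result is $s+1$ pairwise disjoint members of $\mathcal F$.
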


\subsection{Constructing the framework}

\begin{proof}[Proof of \cref{thm:main_all_odd}]
For $r=1$, we have $s_1=2$ and $q_1=1/2$.
The case $k=2$ is Dirac's theorem~\cite{Dirac52}, and the cases
$k\ge3$ follow from the codegree result of R\"odl, Ruci\'nski,
and Szemer\'edi~\cite{RodlRucinskiSzemeredi2008}.
Suppose now that $r\ge3$ is odd, let $k\ge s_r$, and set
$d=k-r\ge1$.
Fix $0<\eps<\beta_r$, and let $n$ be sufficiently large in terms
of $k,r,\eps$.
Let $\cP_n$ be the family of all $n$-vertex $k$-graphs $H$ with
$\delta_d(H)\ge(q_r+\eps)\binom{n-d}r$. By \cref{def:hamilton-framework} and~\cref{thm:HC_HF}, it suffices to construct a Hamilton framework for $\mathcal P_n$.

For each $H\in\cP_n$ and $S\in\binom{V(H)}d$, we have
$\mu(L_H(S))\ge q_r+\eps$.
By \cref{cor:components}, the link $L_H(S)$ has a tight component
$H_S$ with at least $(1/2+\eps/4)\binom{n-d}r$ edges.
This component is unique, since distinct components have disjoint
edge sets.
The selected components form a $d$-vicinity; let $H^\circ$ be its
generated $k$-graph. 

\medskip\noindent\textbf{Verifying~\ref{item:F2}.}
Fix $S\in\binom{V(H)}d$, and let $H'_S$ be the spanning subgraph
of $L_H(S)$ with edge set $E(L_H(S))\setminus E(H_S)$.
Counting pairs $(T,e)$ with $e\in E(H'_S)$ and
$T\in\binom e{r-1}$ gives
$r e(H'_S)\le(n-k+1)|\bd H'_S|$.
Thus $\mu(\bd H'_S)\ge\mu(H'_S)=\mu(L_H(S))-\mu(H_S)$.
Since distinct tight components have disjoint shadows,
\[
\mu(\bd H_S)\le1-\mu(\bd H'_S)
\le1-\mu(L_H(S))+\mu(H_S).
\]
Using $\mu(H_S)\ge1/2$ and $\mu(L_H(S))\le1$, we obtain
\begin{equation}\label{eq:ratioeps}
\frac{\mu(H_S)}{\mu(\bd H_S)}
\ge\frac{\mu(H_S)}{1-\mu(L_H(S))+\mu(H_S)}\ge\frac1{3-2\mu(L_H(S))}
\ge\frac1{1+2\beta_r-2\eps},
\end{equation}
where the last inequality follows from $\mu(L_H(S))\ge q_r+\eps$.
By \cref{thm:Frankl},
\[
\nu^*(H_S)\ge\nu(H_S)
\ge\frac{e(H_S)}{|\bd H_S|}
 =\frac{n-k+1}{r}\,
   \frac{\mu(H_S)}{\mu(\bd H_S)}\ge\frac{n-k+1}{r(1+2\beta_r-2\eps)}>\frac nk,
\]
where the last inequality holds for sufficiently large $n$, since
$k-r(1+2\beta_r-2\eps)\ge2r\eps>0$ by $k\ge s_r$.
Since $H_S\subseteq L_{H^\circ}(S)$ for every $S$,
\cref{lem:matching-transfer} gives a perfect fractional matching
in $H^\circ$.
In particular, every vertex of $H$ lies in an edge of $H^\circ$.

\medskip\noindent\textbf{Verifying~\ref{item:F1} and~\ref{item:F3}.}
For any $S,T\in\binom{V(H)}d$, both $E(H_S)$ and $E(H_T)$
are subsets of $\binom{V(H)}r$. For sufficiently large $n$,
\[
e(H_S)+e(H_T)
\ge\left(1+\frac{\eps}{2}\right)\binom{n-d}r
>\binom nr.
\]
Thus, $H_S$ and $H_T$ share an $r$-edge, which is necessarily
disjoint from $S\cup T$.

We next check the constants required to find switchers.
Since $(2j-1)(2j+1)<(2j)^2$, the definition of $\beta_r$ gives
\[
\beta_r^2
=\prod_{j=1}^{(r+1)/2}\frac{(2j-1)^2}{(2j)^2}
<\prod_{j=1}^{(r+1)/2}\frac{2j-1}{2j+1}
=\frac1{r+2}.
\]
Therefore, $r^2\beta_r^2<r^2/(r+2)<r-1$ for $r\ge3$, and
\[
q_r=1-\beta_r>\frac{r-\sqrt{r-1}}{r}>\frac r{r+2\sqrt{r-1}}.
\]

For every $S$, we have $\mu(L_H(S))\ge q_r+\eps>
\frac r{r+2\sqrt{r-1}}$.
Moreover, by \eqref{eq:ratioeps} and $r\beta_r<\sqrt{r-1}$,
\[
\frac{\mu(H_S)}{\mu(\bd H_S)}
\ge\frac1{1+2\beta_r-2\eps}
>\frac1{1+2\beta_r}
>\frac r{r+2\sqrt{r-1}}.
\]
Thus, \cref{lem:vicinity-toolkit}\textup{(iii)}, applied to
$L_H(S)$ and $H_S$, gives a switcher in every $H_S$.

Since $\mu(H_S)\ge1/2+\eps/4$ for every $S$ and
$1/2+(1/2)^{1-1/r}>1$, \cref{lem:vicinity-toolkit}\textup{(ii)}, applied with
$\delta_0=1/2$, gives an arc.
Each $H_S$ is tightly connected, and any two share an edge,
as shown above. Hence, by~\cref{lem:vicinity-toolkit}\textup{(i)}, $H^\circ$
is tightly connected and contains a closed tight walk of length
$1$ modulo $k$.
Let $H^\star$ be the tight component of $H$ containing $H^\circ$.
Since $H^\circ$ is spanning,
$H^\star$ satisfies
\textup{(F1)--(F3)}.

\medskip\noindent\textbf{Verifying~\ref{item:F4}.}
Suppose that $H_1=H^+-x$ and $H_2=H^+-y$ belong to $\cP_n$,
where $H^+$ has $n+1$ vertices and $x\ne y$.
Choose a $d$-set $S\subseteq V(H^+)\setminus\{x,y\}$.
For $i\in\{1,2\}$, let $H_{i,S}$ be the component selected in
$L_{H_i}(S)$.
Each $H_{i,S}$ has at least $(1/2+\eps/4)\binom{n-d}r$ edges.
For sufficiently large $n$, $H_{1,S}$ and $H_{2,S}$ share an
edge $e$, since $ e(H_{1,S})+e(H_{2,S})\ge\left(1+\frac{\varepsilon}{2}\right)\binom{n-d}{r} >\binom{n+1-d}{r}.$
Then $S\cup e$ belongs to both $H_1^\star$ and $H_2^\star$.
Their union is therefore tightly connected.

Thus, $\cP_n$ admits a Hamilton framework.
Since this holds for every fixed $0<\eps<\beta_r$ and all
sufficiently large $n$, we have $\delta_d^{\HF}(k)\le q_r$.
Together with \cref{thm:HC_HF} and \eqref{eq:lowerbound}, this proves
\cref{thm:main_all_odd}.
\end{proof}

\section{Concluding remarks}\label{sec:conclusion}

In this paper, we established an edge-density condition for a large
tight component in every uniformity. More precisely,
\cref{thm:components} shows that every sufficiently large
$k$-graph of density at least $\widehat q_k$ has a tight component
of density $1/2-o(1)$. 
In odd uniformity, \cref{cor:components} gives a quantitative
strengthening above the critical density. As an application, \cref{thm:main_all_odd}
establishes the conjectured tight Hamilton cycle
threshold~\cite[Conjecture~2.3]{LangSanhuezaBandwidth} for odd $r$
and $k\ge s_r$, showing that the Han--Zhao lower bound is
asymptotically sharp in this range.

For odd $r\ge5$, a natural next step is to extend the equality
$\delta_{k-r}^{\HC}(k)=q_r$ to all $k\ge r+1$.
The restriction $k\ge s_r$ enters only through the
fractional-matching estimate in the selected link components,
so stronger matching bounds would extend the range of our
argument.  Another question is to determine the optimal density
forcing a tight component of density $1/2-o(1)$ in even uniformities
$k\ge4$. The sufficient bound $q_{k+1}$ in
\cref{thm:components} comes from passing to the next odd
uniformity in the neighborhood estimate. A direct analysis of
$\Phi_k$ may improve this bound, although the even-link Hamilton
problem also requires the compatibility and matching properties
discussed in the introduction.

\section*{Acknowledgments and statement on AI use}

We thank Fan Chang for bringing to our attention the discrete isoperimetric inequality for the Hamming graph with vertex set
$[n]^k$; our initial approach used the corresponding continuous
version on $[0,1]^k$.

The authors used ChatGPT during the early exploratory stage of this project. More precisely, in May 2026, GPT-5.5 directed us to Harper's isoperimetric theorem as a potentially relevant tool for estimating neighborhood sizes in Hamming graphs and assisted with preliminary calculations for $r=5,7$.
The authors subsequently studied the theorem, which enabled us to explore some initial cases of the problem and motivated our use of isoperimetric methods.

The main mathematical development of the present paper was subsequently carried out by the authors. In particular, the authors developed the general proof for all odd uniformities, including the analysis of the neighborhood bound and the induction on the uniformity via vertex links.
All mathematical arguments and proofs in the manuscript were written and checked by the authors.
 
\bibliographystyle{abbrv}
\bibliography{ref}

\end{document}